\documentclass[a4paper,12pt]{article}

\usepackage{amsfonts}
\usepackage{amscd,color}
\usepackage{amsmath,amsfonts,amssymb,amscd}
\usepackage{indentfirst,graphicx,epsfig}
\usepackage{graphicx}
\input{epsf}
\usepackage{graphicx}
\usepackage{epstopdf}
\usepackage{caption}
\usepackage{subfigure}
\usepackage{mathrsfs}
\usepackage{ifpdf}

\setlength{\textwidth}{152mm}
\setlength{\textheight}{230mm}
\setlength{\headheight}{2cm}
\setlength{\topmargin}{0pt}
\setlength{\headsep}{0pt}
\setlength{\oddsidemargin}{0pt}
\setlength{\evensidemargin}{0pt}

\parskip=3pt

\voffset -25mm \rm

\newtheorem{thm}{Theorem}[section]

\newtheorem{lem}[thm]{Lemma}
\newtheorem{ob}[thm]{Observation}
\newtheorem{claim}[thm]{Claim}

\newtheorem{pro}[thm]{Proposition}
\newenvironment {proof} {\noindent{\em Proof.}}{\hspace*{\fill}$\Box$\par\vspace{4mm}}
\newcommand{\ml}{l\kern-0.55mm\char39\kern-0.3mm}

\baselineskip=20pt

\title{\textbf{ Proper disconnection of graphs\footnote{Supported by NSFC No.11871034, 11531011 and NSFQH No.2017-ZJ-790.}}}
\author{{\small Xuqing Bai$^1$, You Chen$^1$, Meng Ji$^1$, Xueliang Li$^{1,2}$, Yindi Weng$^1$, Wenyan Wu$^1$} \\
{\small  $^1$Center for Combinatorics and LPMC}\\
{\small Nankai University, Tianjin 300071, China}\\
\small $^2$School of Mathematics and Statistics, Qinghai Normal University\\
\small Xining, Qinghai 810008, China\\
{\small baixuqing0@163.com, chen\_you@163.com, jimengecho@163.com} \\
{\small lxl@nankai.edu.cn, 1033174075@qq.com, wuwenyanhn@163.com}\\
}
\date{}
\begin{document}
\maketitle
\begin{abstract}
For an edge-colored graph $G$, a set
$F$ of edges of $G$ is called a \emph{proper cut} if $F$ is an edge-cut of $G$
and any pair of adjacent edges in $F$ are
assigned by different colors. An edge-colored
graph is \emph{proper disconnected} if for each pair of distinct
vertices of $G$ there exists a proper edge-cut separating them.
For a connected graph $G$, the \emph{proper disconnection
number} of $G$, denoted by
$pd(G)$, is the minimum number of
colors that are needed in order to make $G$ proper
disconnected. In this paper, we first give the exact values of the
proper disconnection numbers for some special families of
graphs. Next, we obtain a sharp upper bound of
$pd(G)$ for a connected graph $G$ of order $n$, i.e,
$pd(G)\leq \min\{ \chi'(G)-1, \left \lceil \frac{n}{2} \right \rceil\}$.
Finally, we show that for given integers $k$ and $n$, the
minimum size of a connected graph $G$ of order $n$
with $pd(G)=k$ is $n-1$ for $k=1$ and
$n+2k-4$ for $2\leq k\leq \lceil\frac{n}{2}\rceil$.
\\[2mm]
\textbf{Keywords:} edge-coloring; proper cut; proper disconnection number; outerplanar graph \\
\textbf{AMS subject classification 2010:} 05C15, 05C40, 05C75.\\
\end{abstract}

\section{Introduction}

All graphs considered in this paper are simple, finite and
undirected. Let $G=(V(G),E(G))$ be a nontrivial
connected graph with vertex set $V(G)$ and edge set $E(G)$.
For $v\in V(G)$, let $d(v)$ denote the $degree$ of $v$, $N(v)$ denote the $neighborhood$
of $v$, and $N[v]$ denote the closed neighborhood of $v$ in $G$. Let $S$ be a subset of $V(G)$, denote by $N(S)$ the
set of neighbors of $S$ in $G$. Denote the diameter of $G$ by $D(G)$. For any notation
or terminology not defined here, we follow those used in \cite{BM}.

Throughout this paper, we use $P_n$, $C_n$, $K_n$ to denote the path,
the cycle and the complete graph of order $n$, respectively. Given two
disjoint graphs $G$ and $H$, the \emph{join} of $G$ and
$H$, denoted by $G\vee H$, is obtained from the vertex-disjoint
copies of $G$ and $H$ by adding all edges between $V(G)$ and $V(H)$.

For a graph $G$, let $c: E(G)\rightarrow[k] = \{1,2,...,k\}$,
$k \in \mathbb{N}$, be an edge-coloring of $G$.
For an edge $e$ of $G$, we denote the color of $e$
by $c(e)$. When adjacent edges of $G$ receive different
colors by $c$, the edge-coloring $c$ is called \emph{proper}.
The \emph{chromatic index} of $G$, denoted by $\chi'(G)$,
is the minimum number of colors needed in a proper edge-coloring of $G$.

Chartrand et al. in \cite{CDHHZ} introduced the concept
of rainbow disconnection of graphs.
An \emph{edge-cut} of a graph $G$ is a set $R$ of edges such that
$G-R$ is disconnected. An edge-coloring is called a
\emph{rainbow disconnection coloring} of $G$ if for every
two vertices of $G$, there exists a rainbow cut in
$G$ separating them. For a connected graph $G$, the \emph{rainbow disconnection number}
of $G$, denoted $rd(G)$, is the smallest number of
colors required for a rainbow disconnection coloring of
$G$. A rainbow disconnection
coloring with $rd(G)$ colors is called an
rd-\emph{coloring} of $G$. In \cite{BCL,BL,HL} the authors have
obtained many results.

Inspired by the concept of rainbow disconnection,
we naturally put forward a concept of proper
disconnection. For an edge-colored
graph $G$, a set $F$ of edges of $G$ is a \emph{proper cut}
if $F$ is an edge-cut of $G$ and any pair of adjacent edges
in $F$ are assigned by different colors. An edge-colored
graph is called \emph{proper disconnected} if there
exists a proper cut for each pair of distinct vertices
of $G$ separating them. For a connected graph $G$, the
\emph{proper disconnection number} of $G$, denoted by $\mathit{pd(G)}$, is
defined as the minimum number of colors that are needed in order to make $G$
proper disconnected, and such an edge-coloring is called a
pd-\emph{coloring}. Clearly, for any pair of vertices
of a graph, a rainbow cut is definitely a proper cut.
In \cite{CDHHZ}, we obtained that if $G$ is a nontrivial
connected graph, then
$\lambda(G) \leq \lambda^+(G) \leq rd(G) \leq \chi'(G) \leq \Delta(G)+1$.
Hence, we immediately have the following observation.
\begin{ob}\label{observation}
If $G$ is a nontrivial connected graph, then
$1\leq \mathit{pd}(G)\leq \mathit{rd}(G)\leq \chi'(G)\leq \Delta(G)+1$.
\end{ob}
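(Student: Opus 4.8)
The plan is to read off all four inequalities, only one of which requires any argument beyond the definitions and the cited chain. First I would dispatch the lower bound $pd(G)\geq 1$: since $G$ is nontrivial and connected it has at least one edge, so the single-color coloring that assigns color $1$ to every edge is available and any edge-coloring uses at least one color; hence $pd(G)$ is a well-defined positive integer.

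The only substantive step is $pd(G)\leq rd(G)$. Here I would take an rd-coloring $c$ of $G$ using exactly $rd(G)$ colors and argue that $c$ is already a pd-coloring. Fix any two distinct vertices $u,v$. By definition of an rd-coloring there is a rainbow edge-cut $R$ separating $u$ and $v$, meaning the edges of $R$ receive pairwise distinct colors. In particular, any two \emph{adjacent} edges of $R$ receive distinct colors, so $R$ is a proper cut separating $u$ and $v$. Since $u,v$ were arbitrary, the same coloring $c$ witnesses that $G$ is proper disconnected, and it uses $rd(G)$ colors; therefore $pd(G)\leq rd(G)$.

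For the remaining inequalities $rd(G)\leq \chi'(G)\leq \Delta(G)+1$ I would simply invoke the chain $\lambda(G)\leq \lambda^+(G)\leq rd(G)\leq \chi'(G)\leq \Delta(G)+1$ established in \cite{CDHHZ}, which already contains both of them. Concatenating these facts yields $1\leq pd(G)\leq rd(G)\leq \chi'(G)\leq \Delta(G)+1$, as asserted.

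I expect no genuine obstacle. The entire content of the statement reduces to the single observation that every rainbow cut is automatically a proper cut (all-distinct colors trivially forces adjacent-distinct colors), with the rest imported wholesale from the cited result; this is exactly why the authors phrase it as an immediate observation rather than a theorem.
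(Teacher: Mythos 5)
Your proposal is correct and matches the paper's reasoning exactly: the paper justifies this observation by noting that every rainbow cut is automatically a proper cut (giving $pd(G)\leq rd(G)$) and importing the chain $rd(G)\leq \chi'(G)\leq \Delta(G)+1$ from \cite{CDHHZ}, which is precisely your argument. Nothing is missing.
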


The concept of monochromatic disconnection of graphs was also put forward by us.
For details we refer to \cite{PL1, PL2}.

\section{Preliminaries}

At the very beginning, we state some fundamental
results on the proper disconnection number
of graphs, which will be used in the sequel.

For trees and cycles, we have the following easy results.

\begin{pro}\label{tree}
If $G$ is a tree, then $pd(G)=1$.
\end{pro}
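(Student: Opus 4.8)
The plan is to show both that one color suffices and that at least one color is needed, the latter being immediate from Observation \ref{observation}, which gives $pd(G)\geq 1$ for any nontrivial connected graph. So the entire content is the upper bound $pd(G)\leq 1$.

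First I would color every edge of $G$ with the single color $1$ and argue this is a pd-coloring. The crucial observation is that, under a one-coloring, a set $F$ of edges is a \emph{proper} cut precisely when $F$ is an edge-cut containing no two adjacent edges, since any two adjacent edges of $F$ would share the same color and violate the properness condition. In particular, any edge-cut consisting of a \emph{single} edge is automatically proper, as it has no pair of adjacent edges at all.

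Next I would exploit the defining property of a tree: for any two distinct vertices $u$ and $v$ there is a unique $u$--$v$ path $P$ in $G$, and every edge of a tree is a bridge. Fixing any single edge $e$ on $P$, the graph $G-e$ is disconnected and places $u$ and $v$ in different components, so $\{e\}$ is an edge-cut separating $u$ and $v$. By the remark above, $\{e\}$ is then a proper cut for the pair $u,v$. Since $u,v$ were arbitrary, the one-coloring is a pd-coloring, whence $pd(G)\leq 1$.

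Combining the two bounds yields $pd(G)=1$. I do not expect any real obstacle here: the argument rests only on the fact that single-edge cuts are trivially proper and that every edge of a tree is a separating bridge, so no case analysis or extremal construction is required.
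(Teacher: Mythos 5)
Your argument is correct and is exactly the intended one: the paper omits the proof as an "easy result," and the standard justification is precisely that every edge of a tree is a bridge, so a single edge on the unique $u$--$v$ path is a one-element (hence trivially proper) edge-cut under the all-$1$ coloring. No gaps; this matches the paper's approach (and is also consistent with the later characterization in Theorem \ref{pd23}, since a single edge is a matching cut).
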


\begin{pro}\label{cycle}
If $C_n$ be a cycle, then
$$\textnormal{pd}(C_n)=\left\{
\begin{array}{lcl}
2, &  {if~n=3},\\
1, &  {if~n\geq 4.}
\end{array}
\right.$$
\end{pro}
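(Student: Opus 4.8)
The plan is to treat the two regimes separately, since the dichotomy at $n=4$ records exactly when a single color can still produce a proper cut. The guiding observation is that with only one color a proper cut is precisely an edge-cut containing no two adjacent edges, i.e. a matching that separates the two prescribed vertices. Moreover, in a cycle every minimal edge-cut separating a pair of vertices consists of exactly two edges, one drawn from each of the two arcs joining the vertices, so the entire question reduces to deciding when these two edges can be chosen to be non-adjacent.

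First I would prove $pd(C_n)=1$ for $n\ge 4$; the bound $pd(C_n)\ge 1$ is automatic since $C_n$ is nontrivial and connected, so only an upper bound is needed. Fix vertices $u,v$ and let the two arcs between them have lengths $a$ and $b$ with $a+b=n$ and $a\le b$. If $u$ and $v$ are non-adjacent, then $a,b\ge 2$, and I would take the edge of the first arc incident with $u$ together with the edge of the second arc incident with $v$; these share no vertex, so they are non-adjacent, and deleting them leaves $u$ and $v$ in distinct components. If $u$ and $v$ are adjacent, then $a=1$ is the edge $uv$ and $b=n-1\ge 3$, so the long arc contains an interior edge incident with neither $u$ nor $v$; pairing that edge with $uv$ yields a matching cut separating $u$ and $v$. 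Since one color then furnishes a proper cut for every pair, $pd(C_n)=1$.

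Next I would handle $n=3$. For the lower bound, note that in $C_3$ any two edges share a vertex, so any edge-cut, which must contain at least two edges because $C_3$ is $2$-edge-connected, contains an adjacent pair; under a single color that pair is monochromatic, so no proper cut exists and $pd(C_3)\ge 2$. For the matching upper bound $pd(C_3)\le 2$ I would exhibit an explicit $2$-coloring, say two edges alike and the third different, and check directly that each of the three vertex pairs is separated by one of the two-edge cuts whose edges carry distinct colors. Combining the two bounds gives $pd(C_3)=2$.

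The only delicate point is the adjacent-vertices case for $n\ge 4$: one must confirm that the long arc genuinely contains an edge avoiding both endpoints, which is exactly where the hypothesis $n\ge 4$ (equivalently, arc length at least $3$) is used and which fails for $C_3$. Everything else is routine verification that the selected pairs of edges are in fact cuts.
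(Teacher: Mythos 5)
Your proof is correct and complete. The paper states this proposition without proof (it is listed among the ``easy results''), so there is no argument to compare against; your route --- exhibiting, for $n\ge 4$, a two-edge matching cut for every pair of vertices (one edge from each arc in the non-adjacent case, the edge $uv$ plus an interior edge of the long arc in the adjacent case), and for $n=3$ noting that every edge-cut contains two adjacent edges --- is exactly the natural one, and it is consistent with the paper's own characterization in Theorem \ref{pd23} of the graphs with $pd(G)=1$ as those admitting a matching cut between every pair of vertices.
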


\begin{lem}\label{4}
If $H$ is a connected subgraph of a connected graph $G$, then
$pd(H)\leq pd(G)$.
\end{lem}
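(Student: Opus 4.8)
The plan is to take a $pd$-coloring of $G$ and restrict it to the edges of $H$, then check that this restricted coloring already makes $H$ proper disconnected. I would set $k = pd(G)$ and fix a $pd$-coloring $c\colon E(G)\to[k]$ of $G$. Since $E(H)\subseteq E(G)$, restricting $c$ to $E(H)$ yields a coloring $c'$ of $H$ using at most $k$ colors. It then suffices to prove that $c'$ is a $pd$-coloring of $H$, since this gives $pd(H)\leq k = pd(G)$.

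To verify this, I would fix an arbitrary pair of distinct vertices $u,v\in V(H)\subseteq V(G)$. Because $c$ is a $pd$-coloring of $G$, there is a proper cut $F\subseteq E(G)$ separating $u$ and $v$ in $G$; that is, $u$ and $v$ lie in different components of $G-F$ and no two adjacent edges of $F$ share a color. The key claim is that $F' := F\cap E(H)$ is a proper cut of $H$ separating $u$ and $v$ under $c'$.

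Properness of $F'$ is immediate, since any two adjacent edges of $F'$ are also adjacent edges of $F$ and hence receive distinct colors under $c$, and therefore under $c'$. For the separation I would argue by contradiction: if $u$ and $v$ were in the same component of $H-F'$, there would be a $u$--$v$ path $P$ in $H-F'$. As $E(H)\subseteq E(G)$ and $P$ avoids $F'=F\cap E(H)$, the path $P$ uses no edge of $F$ whatsoever, so $P$ would be a $u$--$v$ path in $G-F$, contradicting that $F$ separates $u$ and $v$ in $G$. Hence $u$ and $v$ are separated in $H-F'$, so $F'$ is indeed an edge-cut of $H$ realizing this separation.

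Since $u,v$ were arbitrary, $c'$ provides a proper cut separating every pair of distinct vertices of $H$, whence $c'$ is a $pd$-coloring of $H$ and $pd(H)\leq pd(G)$. The argument is largely routine; the only point requiring care is the separation step, where one must observe that a surviving $u$--$v$ path in $H-F'$ lifts to a $u$--$v$ path in $G-F$, precisely because deleting $F\cap E(H)$ from $H$ already removes every edge of $F$ that a path in $H$ could possibly use.
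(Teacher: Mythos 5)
Your proposal is correct and follows exactly the paper's own argument: restrict a pd-coloring of $G$ to $H$ and check that for each pair $u,v\in V(H)$ the set $F\cap E(H)$ is a $u$--$v$ proper cut in $H$. You simply supply the routine details (properness and separation) that the paper leaves implicit.
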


\begin{proof} Let $c$ be a pd-coloring
of $G$. Let $x$ and $y$ be two vertices of $G$. Suppose that $F$ is an $x-y$ proper cut in $G$. Then $F\cap E(H)$ is an $x-y$ proper cut in $H$. Hence, $c$ restricted to $H$ is a proper disconnection coloring of $H$. Thus, $pd(H)\leq pd(G)$.
\end{proof}

A \emph{block} of a graph $G$ is a maximal connected subgraph of $G$ that has no cut-vertex.
Then the block is either a cut-edge, say trivial block, or a maximal 2-connected subgraph. Let $\{B_1,B_2, ...,B_t\}$ be the block set of $G$.

\begin{lem}\label{5}
Let $G$ be a nontrivial connected graph. Then $\mathit{pd}(G)=\max\{ \mathit{pd}(B_i) \ | \ B_i,i\in[t]\}$.
\end{lem}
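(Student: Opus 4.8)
The plan is to prove the two inequalities separately, with the nontrivial direction being the upper bound. For the lower bound, each block $B_i$ is a connected subgraph of $G$, so Lemma~\ref{4} gives $pd(B_i)\le pd(G)$ for every $i\in[t]$, and hence $\max_i pd(B_i)\le pd(G)$. Set $k:=\max_i pd(B_i)$; it remains to show $pd(G)\le k$.

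For the upper bound I would first recall two standard facts about blocks: the edge sets $E(B_1),\dots,E(B_t)$ partition $E(G)$, and any two distinct blocks share at most one vertex, which must then be a cut-vertex of $G$. Using the first fact, I would define an edge-coloring $c$ of $G$ by fixing, for each $i$, a pd-coloring of $B_i$ with $pd(B_i)\le k$ colors taken from $[k]$, and letting $c$ agree with it on $E(B_i)$. This is well defined because the blocks are edge-disjoint, and it uses at most $k$ colors. The whole task is then to check that $c$ is a pd-coloring of $G$.

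To do this, fix distinct $x,y\in V(G)$. The key is the following structural claim: there are a block $B$ and two distinct vertices $u,w\in V(B)$ such that every edge cut of $B$ separating $u$ from $w$ is also an edge cut of $G$ separating $x$ from $y$. To prove it, I would describe $G$ relative to a chosen block $B$ as $B$ together with pairwise vertex-disjoint ``pendant'' subgraphs $H_v$ ($v\in V(B)$), each meeting $B$ only in the vertex $v$; these are exactly the branches of the block--cut tree hanging off $B$. The crucial observation is that a separation inside $B$ cannot be undone from outside: if $F\subseteq E(B)$ separates $u$ and $w$ in $B$, then, since each $H_v$ is attached at a single vertex, it lies entirely in one component of $B-F$, so attaching the pendants does not merge the components of $B-F$. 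Choosing $B,u,w$ from the block--cut tree path between $x$ and $y$ (take the common block with $u=x,w=y$ if $x,y$ lie in one block; otherwise take a block on the path and its two gateway cut-vertices, replacing a gateway by $x$ or $y$ at the ends, and shifting to the neighbouring block in the degenerate case that an end vertex coincides with its gateway) makes $x$ lie in $H_u$ or equal $u$, and $y$ lie in $H_w$ or equal $w$; hence $x$ and $y$ end up in different components of $G-F$.

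Granting the claim, the pd-coloring of $B$ supplies a proper cut $F\subseteq E(B)$ separating $u$ and $w$. Because $F\subseteq E(B)$, two edges of $F$ are adjacent in $G$ if and only if they are adjacent in $B$, so $F$ remains a proper cut in $G$; and by the claim it separates $x$ and $y$. As $x,y$ were arbitrary, $c$ is a pd-coloring of $G$, giving $pd(G)\le k$ and therefore equality. The main obstacle is the structural claim, specifically verifying that a cut realized inside one block is never short-circuited by the remaining blocks and correctly identifying the block and its two gateway vertices in all boundary cases of the block--cut tree path.
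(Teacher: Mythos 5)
Your proposal is correct and follows essentially the same route as the paper: the lower bound via the subgraph lemma, the upper bound by gluing pd-colorings of the blocks, and then locating a single block $B$ and two of its vertices (via the block--cut tree path, with the same shift to a neighbouring block in the degenerate case) whose proper cut in $B$ separates $x$ and $y$ in $G$. Your explicit justification that a cut inside a block is not short-circuited by the pendant branches is a point the paper leaves implicit, but the argument is the same.
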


\begin{proof}
Let $G$ be a nontrivial connected
graph. Let $\{B_{1},B_{2},\cdots,B_{t}\}$ be a block
decomposition of $G$, and let
$k=\max\{pd(B_{i}) \ | \ 1\leq i \leq t\}$. If $G$ has
no cut vertex, then $G=B_{1}$ and the result follows.
Next, we assume that $G$ has at least one cut vertex.
Since each block is a subgraph of $G$,
$pd(G)\geq k$ by Lemma \ref{4}.

Let $c_i$ be a pd-coloring of $B_i$. We define
the edge-coloring $c:$ $E(G) \rightarrow [k]$
of $G$ by $c(e)=c_i(e)$ if $e\in E(B_i)$.

Let $x,y\in V(G)$. If there exists a block,
say $B_i$, that contains both $x$ and $y$,
then any $x-y$ proper cut in $B_i$ is an $x-y$
proper cut in $G$. Next, we consider that no
block of $G$ contains both $x$ and $y$. Assume
that $x\in B_i$ and $y\in B_j$, where $i\neq j$.
Now every $x-y$ path contains a cut vertex,
say $v$, of $G$ in $B_i$ and a cut vertex,
say $w$, of $G$ in $B_j$. Note that $v$ could
equal $w$. If $x\neq v$, then any $x-v$ proper
cut of $B_i$ is an $x-y$ proper cut in $G$.
Similarly, if $y\neq w$, then any $y-w$ proper
cut of $B_j$ is an $x-y$ proper cut in $G$.
Thus, we may assume that $x=v$ and $y=w$.
It follows that $v\neq w$. Consider the $x-y$
path $P=(x=v_1,v_2,\dots,v_p=y)$. Since $x$
and $y$ are cut vertices in different blocks
and no block contains both $x$ and $y$, we can
select the first cut vertex $z$ of $G$ on $P$
except $x$, that is, $z=v_k$ for some
$k$ $(2\leq k\leq p-1)$. Then $x$ and $z$
belong to the same block, say
$B_s$ $(s\in \{1,2,\cdots,t\}\backslash \{i,j\})$.
Then any $x-z$ proper cut of $B_{s}$ is an $x-y$
proper cut of $G$. Hence, $pd(G)\leq k$, and
so $pd(G)=k$.
\end{proof}

An edge cut $F$ of $G$ is a \emph{matching cut}
if $F$ is a matching of $G$. For a
vertex $v\in V(G)$, let $E_{v}$ be the set of
all the edges incident with $v$ in $G$.

\begin{thm}\label{pd23}
Let $G$ be a nontrivial connected graph.
Then $pd(G)=1$
if and only if for any two vertices
of $G$, there exists a matching cut separating them.
\end{thm}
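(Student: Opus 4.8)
The plan is to reduce both directions of the equivalence to a single observation: under a coloring that uses exactly one color, a set $F$ of edges is a proper cut if and only if it is a matching cut. Indeed, if all edges of $G$ receive the same color, then any two adjacent edges lying in $F$ necessarily share that common color; hence the only way for $F$ to avoid a monochromatic pair of adjacent edges is for $F$ to contain no pair of adjacent edges at all, i.e. for $F$ to be a matching. Conversely, if $F$ is a matching then it has no adjacent edges, so the properness requirement is vacuously satisfied and $F$ is a proper cut regardless of the coloring. Establishing this equivalence first is what makes both implications fall out immediately.

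For the forward direction, I would start from a pd-coloring of $G$ using $pd(G)=1$ color, that is, a monochromatic edge-coloring that is proper disconnected. Fixing any two vertices $x,y\in V(G)$, the definition of proper disconnection supplies a proper cut $F$ separating $x$ and $y$; by the observation above, $F$ is a matching cut separating $x$ and $y$, which is exactly the conclusion sought.

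For the backward direction, I would assume that every pair of vertices of $G$ is separated by some matching cut, and then simply color all edges of $G$ with a single color. By the observation, each such matching cut is a proper cut in this monochromatic coloring, so for every pair of vertices there is a proper cut separating them; thus the coloring is proper disconnected and $pd(G)\le 1$. Combined with the lower bound $pd(G)\ge 1$ from Observation~\ref{observation}, this yields $pd(G)=1$.

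I do not expect a genuine obstacle here: the entire content of the statement is the translation between the monochromatic proper-cut condition and the matching-cut condition, and once that equivalence is made explicit both implications are routine. The only point that merits a moment's care is the vacuous satisfaction of the properness condition by a matching, which I would state plainly so that the argument remains airtight.
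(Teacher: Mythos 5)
Your proposal is correct and follows essentially the same route as the paper: both directions reduce to the observation that under a one-color edge-coloring a proper cut is precisely a matching cut (the paper phrases the forward direction as a contradiction, you phrase it directly, but the content is identical). No gaps.
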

\begin{proof}
Let $pd(G)=1$. Assume, to the contrary, that
there exist two vertices $x$ and $y$ which
have no matching cut, i.e. each $x-y$ proper
cut has two adjacent edges. Obviously, the
two adjacent edges are colored differently.
That is, $pd(G)\geq 2$. It is a contradiction.

For the converse, define a coloring $c$ such
that $c(e)=1$ for every $e\in E(G)$. For any
two vertices $x$ and $y$ in $G$, there is a
matching cut which is an $x-y$ proper cut.
Thus, $c$ is a proper disconnection coloring
of $G$ and so $pd(G)=1$.
\end{proof}

From this result, one can see that for a hypercube $Q_n$,
$pd(Q_n)=1$.

\begin{lem}\label{l1}
Let $G$ be a nontrivial connected graph. If there
exist two vertices $u$ and $v$ sharing $t$ $(t\geq1)$
common neighbors in $G$, then
$pd(G)\geq \lceil \frac{t}{2}\rceil$.
Furthermore, if $uv\in E(G)$, then
$pd(G)\geq \lceil \frac{t}{2}\rceil+1$.
\end{lem}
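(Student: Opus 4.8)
The plan is to derive the bound from a single proper cut: I fix a pd-coloring $c$ of $G$ using $pd(G)$ colors, label the $t$ common neighbors $w_1,\dots,w_t$, and let $F$ be a proper cut separating $u$ and $v$. The crucial observation is that for each $i$ the path $u\,w_i\,v$ is a $u$--$v$ path, so since $G-F$ disconnects $u$ from $v$, the cut $F$ must contain at least one of the two edges $uw_i$ and $w_iv$. Thus each common neighbor contributes at least one edge of $F$ incident to $u$ or to $v$.

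Next I would count. Set $A=\{\,i:uw_i\in F\,\}$ and $B=\{\,i:w_iv\in F\,\}$. The observation above says $A\cup B=[t]$, whence $|A|+|B|\ge |A\cup B|=t$. Now the family $\{uw_i:i\in A\}$ consists of edges pairwise adjacent at $u$, and $\{w_iv:i\in B\}$ consists of edges pairwise adjacent at $v$. Since $F$ is a proper cut, adjacent edges of $F$ get different colors, so each of these two families is rainbow; hence $c$ uses at least $|A|$ colors and at least $|B|$ colors. Therefore $pd(G)\ge\max\{|A|,|B|\}\ge\lceil(|A|+|B|)/2\rceil\ge\lceil t/2\rceil$, which is the first inequality.

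For the "furthermore" part, if $uv\in E(G)$ then $uv$ is itself a $u$--$v$ path of length one, so $uv\in F$; moreover $uv$ is adjacent to every $uw_i$ (at $u$) and to every $w_iv$ (at $v$). Appending $uv$ to each of the two pairwise-adjacent families enlarges them to sizes $|A|+1$ and $|B|+1$ while keeping them rainbow, so $pd(G)\ge\max\{|A|,|B|\}+1\ge\lceil t/2\rceil+1$. The one point that needs care, and the only place the argument could go wrong, is that $A$ and $B$ may overlap (a given $w_i$ can have \emph{both} its edges in $F$); I therefore avoid assuming $|A|+|B|=t$ and instead use $A\cup B=[t]$ to get $|A|+|B|\ge t$. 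Beyond that, the conceptual heart is simply recognizing that the proper-cut condition forbids repeated colors only within each of the two stars at $u$ and at $v$, and that these two stars together must absorb all $t$ common neighbors.
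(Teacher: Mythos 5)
Your proof is correct and follows essentially the same route as the paper's: both arguments note that every path $u\,w_i\,v$ forces $F$ to contain $uw_i$ or $w_iv$, conclude that one of the two stars at $u$ or $v$ meets $F$ in at least $\lceil t/2\rceil$ edges, and use the fact that pairwise-adjacent cut edges must receive distinct colors (with the edge $uv$ adjoined for the second bound). Your version merely makes the counting via $|A|+|B|\ge|A\cup B|=t$ more explicit than the paper's "otherwise a path of length two survives" phrasing.
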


\begin{proof}
Let $c$ be a pd-coloring of $G$ and $F(u,v)$
be a $u-v$ proper cut under $c$. Suppose that
$W=N(u)\cap N(v)=\{w_{1},w_{2},\cdots,w_{t}\}$.
Then there are $t$ internally disjoint paths of
length two. Let $E_{1}=\{uw_{i}| 1\leq i\leq t\}$
and $E_{2}=\{vw_{i}| 1\leq i\leq t \}$.
Then
$|F(u,v)\cap E_{1}| \geq  \lceil \frac{t}{2}\rceil$
or $|F(u,v)\cap E_{2}| \geq   \lceil \frac{t}{2}\rceil$.
Otherwise there exists at least one $u-v$ path of
length two in $G\setminus F(u,v)$, a contradiction.
Since $E_{1}\subseteq E_{u}$ and
$E_{2}\subseteq E_{v}$, $pd(G)\geq  \lceil \frac{t}{2}\rceil$.
Moreover, if $uv\in E(G)$, then $uv\in F(u,v)$. So
$|F(u,v)\cap E_{u}|\geq |F(u,v)\cap E_{1}| +1$ or
$|F(u,v)\cap E_{v}|\geq |F(u,v)\cap E_{2}| +1$.
Hence, $pd(G)\geq \lceil \frac{t}{2}\rceil+1$.
\end{proof}

\section{Main results}

In this section, we give the exact values of the proper disconnection
numbers for the wheel graphs, the complete graphs,
the complete bipartite graphs and the outerplanar
graphs. Furthermore, we obtain a sharp upper bound of
$pd(G)$, and derive the minimum size of a graph $G$ of
order $n$ with $pd(G)=k$, where
$1\leq k\leq \lceil\frac{n}{2}\rceil$.
\subsection{Wheel graphs}

\begin{lem}\label{pd-k4}
Let $G=K_4-\{e\}$. Then $pd(G)=2$. Furthermore,
the colors of matching edges are the same for
any pd-coloring.
\end{lem}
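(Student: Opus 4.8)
The plan is to label $V(G)=\{a,b,c,d\}$ so that the deleted edge is $cd$; then $E(G)=\{ab,ac,ad,bc,bd\}$, the degree-$3$ vertices $a,b$ are adjacent, and the degree-$2$ vertices $c,d$ are non-adjacent. It helps to picture $G$ as the $4$-cycle $a\,c\,b\,d\,a$ with the chord $ab$ across it, so that its only two perfect matchings are the opposite-edge pairs $M_1=\{ac,bd\}$ and $M_2=\{ad,bc\}$; the statement ``matching edges have the same colour'' then means $c(ac)=c(bd)$ and $c(ad)=c(bc)$. The lower bound $pd(G)\ge 2$ is immediate from Lemma~\ref{l1} applied to $u=a$, $v=b$: they are adjacent and share the $t=2$ common neighbours $c,d$, whence $pd(G)\ge\lceil t/2\rceil+1=2$.

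For $pd(G)\le 2$ I would write down one explicit $2$-colouring and check that it separates all six pairs. Colour the two matchings with different colours and $ab$ arbitrarily, say $c(ac)=c(bd)=1$, $c(ad)=c(bc)=2$, $c(ab)=1$. Every pair other than $\{a,b\}$ contains $c$ or $d$, and is separated by isolating that vertex: the cut $E_c=\{ac,bc\}$ has colours $1,2$ and the cut $E_d=\{ad,bd\}$ has colours $2,1$, both proper. The pair $\{a,b\}$ is separated by the diagonal cut $\{ab,ad,bc\}$ of the partition $\{a,c\}\mid\{b,d\}$, which is proper because its only adjacencies occur at $a$ and at $b$, where the colours are $1$ and $2$. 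This gives $pd(G)=2$.

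Since a pd-colouring uses exactly $pd(G)=2$ colours, the real content is the ``furthermore'' part: every such colouring must make each matching monochromatic. The first key observation is that with only two colours a star $E_a=\{ab,ac,ad\}$ (and symmetrically $E_b$) can never be a proper cut, as three pairwise-adjacent edges cannot receive pairwise-distinct colours; consequently the pair $\{a,b\}$ can only be separated by one of the two diagonal three-edge cuts $\{ab,ad,bc\}$ or $\{ab,ac,bd\}$, and with two colours whichever one is used forces the corresponding equality $c(ad)=c(bc)$ or $c(ac)=c(bd)$. This already yields one of the two matching equalities, and using the automorphism exchanging $c$ and $d$ I may assume it is $c(ac)=c(bd)$. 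To force the second equality I would bootstrap from another pair, analysing $\{b,c\}$ according to the colour of $bc$: when $c(bc)=c(ac)$ the cut $E_c$ fails and separating $\{b,c\}$ forces $c(ad)=c(bc)$ through the diagonal cut; when $c(bc)\neq c(ac)$ the pair $\{b,c\}$ is already handled by $E_c$ and yields no information, so I would instead separate $\{a,d\}$, whose remaining available cuts each force $c(ad)=1=c(bc)$. I expect the main obstacle to be exactly this last bootstrapping step, because several cut types must be ruled in or out simultaneously; in particular the four-edge cut $\{ac,ad,bc,bd\}$ of the partition $\{a,b\}\mid\{c,d\}$ — which is proper precisely when both matchings are monochromatic of different colours — must be tracked carefully so that every branch of the case analysis collapses to $c(ad)=c(bc)$.
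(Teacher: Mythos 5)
Your proposal is correct and follows essentially the same route as the paper: the same explicit $2$-colouring for the upper bound (with the same diagonal cut $\{ab,ad,bc\}$ for the degree-$3$ pair), and for the ``furthermore'' part the same two-stage argument in which the cut separating the two degree-$3$ vertices forces one matching to be monochromatic and the cuts involving a degree-$2$ vertex then force the other. Your enumeration of the admissible elementary cuts is in fact spelled out more carefully than the paper's rather terse version, and all branches of your sketched case analysis do check out.
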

\begin{proof}
Let $V(G)=\{v_1, v_2, v_3,v_4\}$ and
$E(G)=\{e_{i,j}|1\leq i< j\leq 4\}\setminus\{e_{2,4}\}$.
First, we have $pd(G)\geq 2$ since $K_3$ is a
subgraph of $G$.
Define an edge coloring $c$: $E(G)\rightarrow [2]$
of $G$ as follows. Let
$c(e_{1,2})=c(e_{1,3})=c(e_{3,4})=1$,
$c(e_{1,4})=c(e_{2,3})=2$.
Let $u$ and $v$ be two vertices of $G$.
If $d(u)=2$ or $d(v)=2$, then the edge set incident
with $u$ (or $v$) is a $u-v$ proper cut.
Otherwise,
If $d(u)=d(v)=3$, then the edge set
$\{v_1v_4,v_1v_3,v_2v_3\}$ is a proper cut of
$u$, $v$. Thus, $pd(G)\leq 2$.

For any a pd-coloring, assume that $c(e_{1,3})=1$.
Since $pd(G)=2$ and $v_1-v_3$ cut has at least
three edges, there exist two matching edges
respectively incident with $v_1$, $v_3$ having
color $2$. We claim that the remaining two matching
edges must have the same color. Otherwise, there
has a vertex, say $v_2$, the colors of $E_{v_2}$ are $2$. $E_{v_1}$ or $E_{v_3}$ has two
edges with color $2$. Then there is no $v_1-v_2$
or $v_{2}-v_{3}$ proper cut.
\end{proof}

\begin{thm}\label{W_n}
If $W_n=C_n\vee K_1$ is the wheel of order $n+1\geq 4$,
then
$$\textnormal{pd}(W_n)=\left\{
\begin{array}{lcl}
2, &  {if~n=3k},\\
3, &  {otherwise.}
\end{array}
\right.$$
\end{thm}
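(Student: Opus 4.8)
The plan is to separate the role of the hub $h$ (the $K_1$) from that of the rim vertices $v_1,\dots,v_n$ of the cycle $C_n$, writing $s_i=c(hv_i)$ for the spoke colours and $r_i=c(v_iv_{i+1})$ (indices mod $n$) for the rim colours. By Observation~\ref{observation} and the triangle $hv_iv_{i+1}$ (so $pd(W_n)\ge pd(C_3)=2$ via Lemma~\ref{4} and Proposition~\ref{cycle}) we already have $pd(W_n)\ge 2$. For the universal upper bound $pd(W_n)\le 3$ I would three-colour the edges so that the three edges meeting each rim vertex get distinct colours: properly colour the rim cycle (two colours if $n$ is even, three if $n$ is odd) and give each spoke $hv_i$ the colour missing at $v_i$. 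Then $E_{v_i}$ is a proper cut for every $i$, so any two vertices are separated by isolating whichever of them is a rim vertex, giving $pd(W_n)\le 3$.

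The heart of the matter is deciding when two colours suffice. First I would record the structural fact that any proper cut using only two colours contains at most two spokes, since all spokes meet $h$ and are pairwise adjacent. Hence, when separating $h$ from a rim vertex $v_i$, the component containing $v_i$ avoids $h$, lies in $C_n$, and has at most two vertices; since a single vertex $\{v_i\}$ would force its three mutually adjacent edges to receive three colours, the only option is a $2$-arc $A_{i-1}=\{v_{i-1},v_i\}$ or $A_i=\{v_i,v_{i+1}\}$. Call $A_i$ \emph{usable} when its boundary $\{r_{i-1},r_{i+1},s_i,s_{i+1}\}$ is properly coloured, i.e. $s_i\ne s_{i+1}$, $r_{i-1}\ne s_i$ and $r_{i+1}\ne s_{i+1}$. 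Then the $h$–$v_i$ requirement reads: for each $i$, $A_{i-1}$ or $A_i$ is usable. The same size bound applied to an adjacent rim pair $v_i,v_{i+1}$ shows they are separated iff $A_{i-1}$ or $A_{i+1}$ is usable, while non-adjacent pairs are handled automatically once every rim vertex lies in a usable arc.

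Writing $B$ for the set of usable indices, the two requirements say precisely that the complement $\overline{B}$ has no two indices at cyclic distance $1$ or $2$, so consecutive gaps of $\overline{B}$ are at least $3$. The decisive step, which I expect to be the main obstacle, is the reverse inequality: no three consecutive arcs $A_i,A_{i+1},A_{i+2}$ can all be usable. Indeed $A_i$ forces $r_{i+1}\ne s_{i+1}$ and $A_{i+2}$ forces $r_{i+1}\ne s_{i+2}$, so with two colours $s_{i+1}=s_{i+2}$, contradicting the usability of $A_{i+1}$ (which needs $s_{i+1}\ne s_{i+2}$). This is an unconditional property of any two-colouring, so every gap of $\overline{B}$ is at most $3$, hence exactly $3$; summing the gaps around the cycle gives $n=3\,|\overline{B}|$. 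Thus a two-colouring making $W_n$ proper disconnected can exist only when $3\mid n$, which with the upper bound yields $pd(W_n)=3$ whenever $3\nmid n$.

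It then remains to produce such a colouring when $n=3k$. Here I would use the period-$3$ pattern $s\equiv(1,1,2)$ and $r\equiv(2,1,1)$ around the cycle; one checks that $A_i$ is usable exactly when $i\not\equiv 1\pmod 3$, so $B=\{i:i\not\equiv 1\pmod 3\}$, whose complement has all gaps equal to $3$ and so meets both separation requirements. This gives $pd(W_n)\le 2$, hence $pd(W_n)=2$ for $3\mid n$; the smallest case $n=3$, where $W_3=K_4$ and the arc analysis degenerates, is verified directly. Finally, since each $\{h,v_i,v_{i+1},v_{i+2}\}$ induces a copy of $K_4-e$, Lemma~\ref{pd-k4} constrains these colours locally and can be invoked to streamline the spoke analysis that underlies the two delicate points above, namely the two-spoke bound and the three-consecutive-arcs obstruction.
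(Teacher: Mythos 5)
Your proposal is correct, and its decisive step is genuinely different from the paper's. For the upper bound $pd(W_n)\le 3$ and for the $n=3k$ construction you essentially coincide with the paper (your period-$3$ pattern $s\equiv(1,1,2)$, $r\equiv(2,1,1)$ is exactly the paper's coloring, which puts color $2$ on the spokes $v_0v_{3i}$ and the rim edges $v_{1+3j}v_{2+3j}$), but the proof that two colors cannot suffice when $3\nmid n$ diverges. The paper argues via the rigidity of $2$-colorings of $K_4-e$ (Lemma~\ref{pd-k4}): applying it to the copies of $K_4-e$ induced by $\{v_0,v_i,v_{i+1},v_{i+2}\}$ yields $c(v_0v_i)=c(v_{i+1}v_{i+2})$ and $c(v_0v_{i+2})=c(v_iv_{i+1})$ for all $i$, and since $\gcd(3,n)=1$ this propagation forces the whole wheel to be monochromatic, contradicting $pd\ge 2$. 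You instead classify the possible cuts directly: at most two spokes can lie in a $2$-colored proper cut, so the side avoiding the hub is a $2$-arc; separability becomes the combinatorial condition that the ``usable'' arcs hit every vertex and every distance-$2$ pair, while your three-consecutive-arcs obstruction ($A_i$ and $A_{i+2}$ both usable force $s_{i+1}=s_{i+2}$, killing $A_{i+1}$) caps the gaps of the non-usable set at $3$; summing gaps gives $3\mid n$. Your route is longer but self-contained and arguably more transparent (it characterizes all $2$-colored proper cuts rather than relying on the somewhat terse color-propagation step), and it correctly quarantines the degenerate case $n=3$, where $W_3=K_4$ and the arc analysis does not apply; the paper's route is shorter because it reuses Lemma~\ref{pd-k4}, which is also needed elsewhere. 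Your closing suggestion that Lemma~\ref{pd-k4} could streamline the two delicate steps is plausible but not needed, since both steps are already fully justified as written.
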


\begin{proof}
Let $V(W_n)=\{v_0,v_1,\ldots,v_n\}$ and
$E(W_n)=\{v_0v_i,v_0v_n,v_iv_{i+1},
v_1v_n|1\leq i\leq n-1\}$. Let $e_{i,j}={v_iv_j}$.
For convenience, we use the elements of
$\mathbb{Z}_{n}$ of integer mod n to express subscripts.
First, pd$(W_n)\geq 2$, since $K_3$ is a subgraph of $W_n$.

\emph{Case 1}. If $n=3k$, then define an edge
coloring $c$: $E(W_n)\rightarrow [2]$ of $W_n$.
Let $c(e_{0,3i})=c(e_{1+3j,2+3j})=2$ where
$1 \leq i\leq k$ and $0\leq j\leq k-1$ and assign
color $1$ to the remaining edges.
Let $v_i$, $v_j$ and $v_r$ be vertices of $W_n$.
Consider $i=3t,j=3t+1,r=3t-1$ $(1\leq t\leq k)$,
then the edge set
$\{e_{i-1,i},e_{0,i},e_{0,i+1},e_{i+1,i+2}\}$
is a proper cut between $\{v_i,v_j\}$ and
$V(W_n)\setminus\{v_i,v_j\}$ and the edge
set $\{e_{i,i+1},e_{0,i},e_{0,i-1},e_{i-2,i-1}\}$
is also a proper cut between $\{v_i,v_r\}$ and
$V(W_n)\setminus\{v_i,v_r\}$.
Let $v_k$, $v_l$ be any two vertices of $W_n$
where $k$, $l$ are integers.
If $v_k$ is not adjacent to $v_l$,
then there exists an edge $v_kv_p$ such that
$p=3t$ or $k=3t$. By above argument, we have a
proper cut between $\{v_kv_p\}$
and $V(W_n)\setminus\{v_k,v_p\}$, which is a
proper cut between $v_k$ and $v_l$.
Assume $v_k$ is adjacent to $v_l$ with $k\leq l$.
If $k$ or $l$ is multiple of $3$, without loss of
generality, $k=3t$, then there exists a proper
cut between $\{v_k,v_p\}$ and $V(W_n)\setminus\{v_k,v_p\}$
where $v_q\in N(v_k)\setminus \{v_{0},v_{l}\}$,
which is a proper cut between $v_k$ and $v_l$.
If $k$, $l$ are not multiple of $3$, then there
exits a proper cut between $\{v_k,v_s\}$ and
$V(W_n)\setminus\{v_k,v_s\}$ where
$v_s\in N(v_k)\setminus \{v_{0},v_{l}\}$,
which is a proper cut between $v_k$ and $v_l$.
Thus, pd$(W_n)=2$.

\emph{Case 2}. $n\neq 3k$.
Assume that pd$(W_n)=2$. Let $c(e_{0,1})=1$.
Then $c(e)=1$ for any edge $e$ of $G$ by Lemma
\ref{pd-k4}. This is a contradiction with pd$(W_n)\geq 2$.
Thus, pd$(W_n)\geq 3$.

Now we define an edge coloring $c$ :
$E(G)\rightarrow [3]$ of $W_{n}$ $(n\neq 3k)$.
First, let $c$ be a proper edge coloring of $C_n$
using the colors $1,2,3$. For each integer $i$
with $1\leq i\leq n$, let
$a_i\in \{1,2,3\}\setminus\{c(v_{i-1}v_i),c(v_{i}v_{i+1})\}$,
and let $c(v_0v_i)=a_i$.
Thus, $E_{v_i}$ is a proper set for $1\leq i\leq n$.
Let $x$, $y$ be two distinct vertices of $W_n$.
Then at least one of $x$ and $y$ belongs to $C_n$,
say $x\in V(C_n)$. Since $E_x$ separates $x$ and
$y$, it follows that $c$ is a proper disconnection
coloring of $W_n$ using three colors.
Therefore, $pd(G)=3$ for $n\neq 3k$.
\end{proof}
\subsection{Complete bipartite graphs and complete graphs}
Now we introduce some notations. Let $X$ and $Y$
be sets of vertices of a graph $G$, we denote by
$E[X,Y]$ the set of all the edges of $G$ with
one end in $X$ and the other end in $Y$.

\begin{thm}\label{pd(K_{n,n})}
Let $K_{n,n}$ be a complete bipartite graph.
Then $pd(K_{n,n})=\lceil\frac{n}{2} \rceil$.
\end{thm}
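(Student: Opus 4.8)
The plan is to prove matching lower and upper bounds. For the lower bound, fix two vertices $u,v$ lying in the same part of the bipartition of $K_{n,n}$. Since every vertex of the opposite part is adjacent to both $u$ and $v$, they share exactly $t=n$ common neighbours, and since they lie in the same part they are non-adjacent. Lemma~\ref{l1} then gives $pd(K_{n,n})\ge \lceil \frac{n}{2}\rceil$ immediately, with no additive correction because $uv\notin E(K_{n,n})$.

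For the upper bound I would exhibit an edge-colouring $c$ with exactly $k=\lceil \frac{n}{2}\rceil$ colours and show it is a pd-colouring. Write $A=\{a_0,\dots,a_{n-1}\}$ and $B=\{b_0,\dots,b_{n-1}\}$, and base the colouring on the Latin-square pattern $(i+j)\bmod n$: group the $n$ residues into $\lceil \frac{n}{2}\rceil$ blocks of size (almost all) two, and let $c(a_ib_j)$ be the index of the block containing $(i+j)\bmod n$. Equivalently this merges the $n$ perfect matchings $M_r=\{a_ib_{i+r}\}$ into $\lceil \frac{n}{2}\rceil$ colour classes, each a union of at most two matchings. The point of using so few colours is that for $n\ge 3$ a single star $E_v$ can never be a proper cut, so every separating cut must be \emph{balanced}: to separate two vertices I will split $A=A_1\sqcup A_2$ and $B=B_1\sqcup B_2$ with each part of size about $\frac{n}{2}$, put $S=A_1\cup B_1$, and use the boundary $F=E[A_1,B_2]\cup E[A_2,B_1]$. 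At every vertex this cut meets only about $\frac{n}{2}\le k$ edges, so there is at least a chance of colouring them rainbow.

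The verification then splits into two cases. For a same-part pair, say $a_p,a_q$, I would place $a_p\in A_1$ and $a_q\in A_2$ and choose the four parts so that, at each boundary vertex, the set of residues $(i+j)\bmod n$ realised by its cut-edges is a transversal of the colour blocks; the cyclic structure of the Latin pattern is what makes such a choice possible, and it forces the edges of $F$ at $a_p$, at $a_q$, and at each $b\in B_1\cup B_2$ to be pairwise differently coloured, so $F$ is a proper $a_p$--$a_q$ cut. A cross-part pair $a_p,b_q$ is handled by the analogous balanced split with $a_p\in A_1$ and $b_q\in B_2$, the only new feature being that the (coloured) edge $a_pb_q$ itself lies on the boundary.

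The main obstacle will be choosing $A_1,A_2,B_1,B_2$ compatibly with the \emph{fixed} colouring so that the rainbow condition holds simultaneously at \emph{all} boundary vertices: a vertex $a_i$ sees a shifted copy of $B_2$, and the shift by $i$ must not collapse two residues of the same colour block. Making one colouring serve every pair, together with the bookkeeping forced by the odd-$n$ case (where one colour block is a single residue and the parts cannot be split evenly), is where the real care is needed. Once both bounds are established they coincide at $\lceil \frac{n}{2}\rceil$, giving the claim.
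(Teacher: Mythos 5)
Your lower bound is exactly the paper's: two vertices in the same part share $n$ common neighbours and are non-adjacent, so Lemma~\ref{l1} gives $\lceil \frac{n}{2}\rceil$ with no additive term. Your upper-bound plan is also the paper's in outline (a Latin-square colouring merging the perfect matchings into $\lceil \frac{n}{2}\rceil$ classes, and balanced cuts of the form $E[A_1,B_1]\cup E[A_2,B_2]$), but the step you explicitly defer --- choosing the blocks and the four parts so that the cut is rainbow at every boundary vertex simultaneously --- is precisely the content of the proof, and it is not automatic. In particular, the grouping of residues into blocks cannot be arbitrary: if you pair consecutive residues $\{0,1\},\{2,3\},\dots$ and take $B_1,B_2$ to be contiguous intervals, then already for $n=4$ the vertex $a_0$ has its two cut-edges to $\{b_0,b_1\}$ carrying residues $0$ and $1$, hence the same colour, and the cut is not proper. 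The paper's choice, $c(x_iy_j)\equiv i+j-1\pmod{\lceil \frac{n}{2}\rceil}$, amounts to pairing antipodal residues $r$ and $r+\lceil \frac{n}{2}\rceil$; with that choice any $\lceil \frac{n}{2}\rceil$ consecutive indices automatically form a transversal of the colour classes, so the rainbow condition at every vertex of a contiguous split holds for free.

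The second deferred point is the same-part case. You say you would ``choose the four parts'' so that $a_p$ and $a_q$ land on opposite sides, but each side of a valid split must remain a partial transversal of the antipodal pairs as seen from every opposite-part vertex, so you cannot move $a_p$ across the split arbitrarily. The paper resolves this with a specific swap: replace $x_i$ by $x_{i+\lceil \frac{n}{2}\rceil}$ in $X_1$, which preserves all rainbow conditions because $c(x_iy_t)=c(x_{i+\lceil \frac{n}{2}\rceil}y_t)$ for every $t$, i.e.\ the colouring is invariant under the shift $i\mapsto i+\lceil \frac{n}{2}\rceil$. That invariance is the missing idea that simultaneously dictates the right block structure and makes the same-part separation possible; without it your proposal is a correct blueprint for the paper's argument but not yet a proof.
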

\begin{proof}
Let $G=K_{n,n}$ and suppose that $X$ and $Y$ are
two partite vertex sets of $G$, where
$X=\{x_{1}, x_{2}, \cdots, x_{n}\}$ and
$Y=\{y_{1}, y_{2}, \cdots, y_{n}\}$. For any
two vertices $v_{i},v_{j}\in X$, there are $n$
common neighbors in $Y$. From Lemma \ref{l1},
it follows that $pd(G)\geq \lceil\frac{n}{2} \rceil$.

Now, for the upper bound, we define an edge
coloring $c: E(G) \rightarrow \{0,1,\cdots,\lceil\frac{n}{2} \rceil-1\}$
of $G$ by assigning each edge $x_{i}y_{j}$
with $c(x_{i}y_{j}) \equiv i+j-1 \pmod {\lceil\frac{n}{2} \rceil}$ for $1\leq i, j\leq n$.
Let $X=X_{1}\cup X_{2}$,
where $X_{1}=\{x_{1}, x_{2}, \cdots, x_{\lceil\frac{n}{2} \rceil}\}$
and $X_{2}=\{x_{\lceil\frac{n}{2} \rceil+1}, x_{\lceil\frac{n}{2} \rceil+2}, \cdots, x_{n}\}$. Let  $Y=Y_{1}\cup Y_{2}$,
where $Y_{1}=\{y_{1}, y_{2}, \cdots, y_{\lceil\frac{n}{2} \rceil}\}$ and $Y_{2}=\{y_{\lceil\frac{n}{2} \rceil+1}, y_{\lceil\frac{n}{2} \rceil+2}, \cdots, y_{n}\}$.

\begin{claim} For each pair of vertex sets $ X_{k}$
and $Y_{l}$ $(k,l\in[2])$, $E(X_{k},Y_{l})$ is a proper set.
\end{claim}

\begin{proof}
By symmetry, we only consider the vertex sets $X_{1}$
and $Y_{1}$. For each vertex $x_{i}$ of $X_{1}$,
since
$c(x_{i}y_{j})\equiv i+j-1 \pmod {\lceil\frac{n}{2} \rceil}$,
it follows that
$c(x_{i}y_{1})\neq c(x_{i}y_{2})\neq \cdots
\neq c(x_{i}y_{\lceil\frac{n}{2} \rceil})$.
Therefore, the edges of $G[X_{1},Y_{1}]$ incident
with the same vertex are colored by different colors.
Thus, $E(X_{1},Y_{1})$ is a proper set.
 \end{proof}
 We now show that for each pair of vertices $u$ and
 $w$ of $G$, there is a proper cut separating them.
 Two cases are needed to be discussed.

\emph{Case 1. $u\in X$, $w\in Y$. }

Suppose that $u=x_{i}$ and $w={y_{j}}$.
If $x_{i}\in X_{1}$, $y_{j}\in Y_{1}$ or
$x_{i}\in X_{2}$, $y_{j}\in Y_{2}$.
Let $F(x_{i},y_{j})$= $E[X_{1},Y_{1}] \cup E[X_{2},Y_{2}]$.
Consider the subgraph $H$ of $G$ obtained by
deleting $F(x_{i},y_{j})$ from $G$, then $H$
has two components $G[X_{1},Y_{2}]$ and
$G[X_{2},Y_{1}]$ (See Figure \ref{f4}).
Since $x_{i}\in G[X_{1},Y_{2}]$ and $y_{j}\in G[X_{2},Y_{1}]$,
we can know that $F(x_{i},y_{j})$ is an edge
cut separating $x_{i}$ and $y_{j}$.
Moreover, $E[X_{1},Y_{1}]$ and $E[X_{2},Y_{2}]$
are proper sets by the claim and
$E[X_{1},Y_{1}]\cap E[X_{2},Y_{2}]= \phi$,
so $F(x_{i},y_{j})$ is an $x_{i}-y_{j}$
proper cut. If $x_{i}\in X_{1}$, $y_{j}\in Y_{2}$ or
$x_{i}\in X_{2}$, $y_{j}\in Y_{1}$,
we can similarly show that
$E[X_{1},Y_{2}] \cup E[X_{2},Y_{1}]$
is an $x_{i}-y_{j}$ proper cut.

\begin{figure}[!htb]
  \centering
  \includegraphics[width=0.4\textwidth]{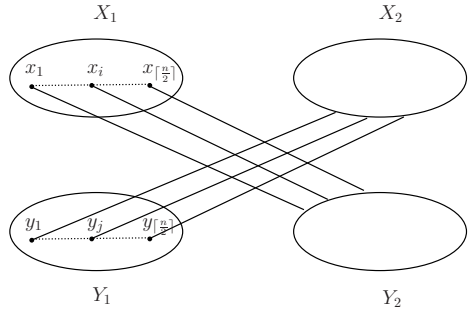}
  \caption{A graph $H$}\label{f4}.
\end{figure}

\emph{Case 2. $u,w\in X$ or $u,w\in Y$.}

By symmetry, suppose that $u=x_{i}$ and $w=x_{j}$.
If $x_{i},x_{j}\in X_{1}$,
where $1\leq i < j \leq \lceil\frac{n}{2} \rceil$.
Let $X'_{1}=X_{1}-x_{i}+v_{i+\lceil\frac{n}{2} \rceil}$ and  $X'_{2}=X_{2}+x_{i}-v_{i+\lceil\frac{n}{2} \rceil}$,
then $x_{i}\in X'_{2}$ and $x_{j}\in X'_{1}$.
For every vertex $y_{t}$ of $Y$, we know that
$c(x_{i}y_{t})=c(x_{i+\lceil\frac{n}{2} \rceil}y_{t})
\equiv i+t-1 \pmod {\lceil\frac{n}{2} \rceil}$ .
By the same method used in the claim,
for vertex sets $ X'_{k}$ and
$Y_{l}$ $(k,l\in[2])$, $E(X'_{k},Y_{l})$ is a proper
set. Let $F'(x_{i},x_{j})=E(X'_{1},Y_{1}) \cup E(X'_{2},Y_{2})$.
Let $H'= G[X_{1}',Y_{2}] \cup G[X_{2}',Y_{1}]$ (See Figure \ref{f2})
by deleting $F'(x_{i},x_{j})$ from $G$.
According to Case 1, $F'(x_{i},x_{j})$ is an
$x_{i}-x_{j}$ proper cut.
If $x_{i}\in X_{1}$ and $x_{j}\in X_{2}$.
Similarly, we can get $E[X_{1},Y_{1}] \cup E[X_{2},Y_{2}]$
is an $x_{i}-x_{j}$ proper cut.
\end{proof}

\begin{figure}[!htb]
  \centering
  \includegraphics[width=0.4\textwidth]{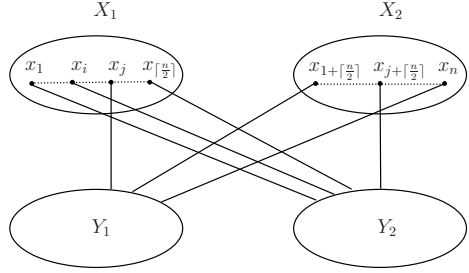}
  \caption{A graph $H'$}\label{f2}.
\end{figure}

\begin{thm}\label{K_{m,n}}
Let $K_{m,n}$ be a complete bipartite graph
with $2\leq m\leq n $. Then
$\mathit{pd}(K_{m,n})=\lceil\frac{n}{2}\rceil$.
\end{thm}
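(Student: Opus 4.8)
The plan is to sandwich $pd(K_{m,n})$ between $\lceil n/2\rceil$ from below and above, using the two earlier results that do all the real work: Lemma \ref{l1} for the lower bound and the combination of Lemma \ref{4} with Theorem \ref{pd(K_{n,n})} for the upper bound. The one subtlety worth keeping in mind is that the $\lceil n/2\rceil$ in the answer is governed by the \emph{larger} side $Y$, yet the lower bound is most naturally extracted from a pair of vertices in the \emph{smaller} side $X$; this is exactly why the hypothesis $m\ge 2$ (rather than $n\ge 2$) is what we need.

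For the lower bound, let $X=\{x_1,\dots,x_m\}$ and $Y=\{y_1,\dots,y_n\}$ be the two parts. Since $m\ge 2$, we may pick two distinct vertices $x_1,x_2\in X$. In a complete bipartite graph every vertex of $X$ is adjacent to every vertex of $Y$, so $N(x_1)\cap N(x_2)=Y$, giving $t=n$ common neighbors. Because $X$ is an independent set we have $x_1x_2\notin E(K_{m,n})$, so the relevant estimate is the first (non-incident) case of Lemma \ref{l1}, which yields $pd(K_{m,n})\ge \lceil \frac{n}{2}\rceil$. I would state explicitly that we use the pair in $X$ and not in $Y$, since a pair in $Y$ would only give the weaker bound $\lceil \frac{m}{2}\rceil$.

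For the upper bound I would invoke the monotonicity under connected subgraphs (Lemma \ref{4}) together with the value for balanced complete bipartite graphs (Theorem \ref{pd(K_{n,n})}). The point is that $K_{m,n}$ is a connected subgraph of $K_{n,n}$: embed $X$ as $m$ of the $n$ vertices on one side of $K_{n,n}$ and $Y$ as the entire other side, so that $K_{m,n}$ is obtained from $K_{n,n}$ by deleting the remaining $n-m$ vertices of that side together with their incident edges; since $m\ge 1$ and $n\ge 1$, the resulting graph $K_{m,n}$ is connected. Lemma \ref{4} then gives $pd(K_{m,n})\le pd(K_{n,n})$, and Theorem \ref{pd(K_{n,n})} evaluates the right-hand side as $\lceil \frac{n}{2}\rceil$.

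Combining the two bounds yields $pd(K_{m,n})=\lceil \frac{n}{2}\rceil$, completing the proof. I do not anticipate any genuine obstacle here: the entire argument is a short application of results already established in the excerpt. The only place demanding a moment of care is the lower-bound bookkeeping described above, namely choosing the pair of vertices on the $m$-side so that their common neighbourhood has size $n$, which is what forces the ceiling to be taken over $n$ rather than $m$.
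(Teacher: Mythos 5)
Your proposal is correct and follows essentially the same route as the paper's own proof: the lower bound via Lemma \ref{l1} applied to two vertices of the smaller part $X$ (which have all $n$ vertices of $Y$ as common neighbors), and the upper bound via Lemma \ref{4} together with Theorem \ref{pd(K_{n,n})}. Your added remarks about why the pair must be chosen in $X$ and why the non-adjacent case of Lemma \ref{l1} applies are careful clarifications of details the paper leaves implicit, but the argument is the same.
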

\begin{proof}
Let $G=K_{m,n}$ and $V(G)=X\cup Y$, where
$X=\{x_{1},\cdots, x_{m}\}$ and
$Y=\{y_{1}, \cdots, y_{n}\}$. Consider any
two vertices $v_{i},v_{j}\in X$, there are
$n$ common neighbors in $Y$. From Lemma
\ref{l1}, it follows that
$pd(G)\geq \lceil\frac{n}{2} \rceil$.
Also, $G$ is a subgraph of $K_{n,n}$.
By Theorem \ref{pd(K_{n,n})} and Lemma
\ref{4}, we have that
$pd(G)\leq pd(K_{n,n})= \lceil\frac{n}{2} \rceil$.
Hence, it follows that
$pd(G)= \lceil\frac{n}{2} \rceil$.
\end{proof}

\begin{thm}\label{t2}
For each integer $n\geq 2$, $pd(K_{n})=\lceil \frac{n}{2} \rceil$.
\end{thm}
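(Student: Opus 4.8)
The plan is to prove the two bounds separately, deriving the lower bound from Lemma~\ref{l1} and the upper bound from an explicit modular colouring together with the subgraph monotonicity of Lemma~\ref{4}.

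For the lower bound I would first dispose of $n=2$ by hand: a single edge is a tree, so $pd(K_2)=1=\lceil 2/2\rceil$ by Proposition~\ref{tree}. For $n\ge 3$, pick any two vertices $u,v$; in $K_n$ they are adjacent and share exactly $t=n-2\ge 1$ common neighbours, so the second part of Lemma~\ref{l1} gives $pd(K_n)\ge \lceil (n-2)/2\rceil+1=\lceil n/2\rceil$, using that adding an integer commutes with the ceiling.

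For the upper bound I would treat the even case first and then reduce the odd case to it. Write $n=2m$, label the vertices $v_0,\dots,v_{n-1}$ cyclically, and colour each edge by $c(v_iv_j)\equiv i+j \pmod m$, so that exactly $m=\lceil n/2\rceil$ colours are used. The key observation is that, because $n=2m\equiv 0\pmod m$, the residues mod $m$ of consecutive indices around the cycle are themselves consecutive mod $m$ even across the wrap-around point; hence any arc (set of consecutive vertices) of length at most $m$ has pairwise distinct index-residues mod $m$. Given vertices $v_a,v_b$, I would then take the length-$m$ arc $A=\{v_a,\dots,v_{a+m-1}\}$ (or, if $v_b$ happens to lie in it, the arc starting at $v_b$ instead), and let $B$ be the complementary length-$m$ arc. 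Since $|A|=|B|=m$, for every vertex the set of indices on the opposite side forms an arc of length $m$, so its incident cut-edges receive $m$ distinct colours; thus $E[A,B]$ is a proper edge-cut, and it separates $v_a$ from $v_b$. This shows $pd(K_{2m})\le m$. For odd $n=2m-1$ we have $K_n\subseteq K_{2m}$, so Lemma~\ref{4} gives $pd(K_n)\le pd(K_{2m})\le m=\lceil n/2\rceil$, completing the upper bound.

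Combining the two bounds yields $pd(K_n)=\lceil n/2\rceil$. The one step requiring care is the even-case construction: one must check that the length-$m$ arc cut is genuinely proper, which rests on the arithmetic fact that $2m\equiv 0\pmod m$ makes the cyclic colouring behave like a ``consecutive'' colouring with no defect at the wrap-around point. Once the even case is in place, the odd case costs nothing thanks to monotonicity, so the delicate part is confined to verifying that balanced arc cuts are proper under this single modular colouring.
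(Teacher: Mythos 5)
Your proposal is correct and follows essentially the same route as the paper: the identical lower bound via Lemma~\ref{l1} (two adjacent vertices of $K_n$ with $n-2$ common neighbours give $pd(K_n)\ge\lceil (n-2)/2\rceil+1=\lceil n/2\rceil$), and the same modular colouring $c(v_iv_j)\equiv i+j\pmod{\lceil n/2\rceil}$ with balanced edge-cuts for the upper bound. The only cosmetic differences are that you realize the cuts as complementary arcs of length $m$ rather than the paper's single-vertex swap between the two fixed halves, and that you deduce the odd case from the even case by subgraph monotonicity (Lemma~\ref{4}) instead of arguing directly --- the same device the paper itself uses to get $pd(K_{m,n})$ from $pd(K_{n,n})$.
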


\begin{proof}
Let $a=\lceil \frac{n}{2}\rceil$ and
$V(K_{n})=X\cup Y$, where $X=\{v_{1},v_{2},\cdots,v_{a}\}$
and $Y=\{v_{a+1},v_{a+2},\cdots,v_{n}\}$.
For any two vertices $v_{i},v_{j}\in V(K_{n})$,
there are $n-2$ common neighbors in $K_{n}$
and $v_{i}v_{j}\in E(K_{n})$. Then
$pd(K_{n})\geq \lceil\frac{n}{2}\rceil$
by Lemma \ref{l1}. Define an edge coloring
$c: E(G)\rightarrow \{0,1,\cdots,a-1\}$ such
that $c(v_{i}v_{j})\equiv i+j-1 \pmod a$.
For any two vertices $u$, $w\in V(G)$,
let $u\in X$ and $w\in Y$. Since $E[X,Y]$
separates $u$ and $v$, and $E[X,Y]$ is a
$u-w$ proper set in $K_{n}$ by a similar
proof method as the claim of above theorem,
then $E[X,Y]$ is a $u-w$ proper cut.
If $u,w \in X$, assume that $u=v_{i}$ and
$w=v_{j}$, where $i\leq j$. Let $i_{0}=i+a$,
then $v_{i_{0}}\in Y$.
Let $v_{r}\in V(K_{n})(r\neq i,j)$, then
$c(v_{i}v_{r})=c(v_{i_{0}}v_{r})$.
Let $X'=X\setminus \{v_{i}\}$ and
$Y'=Y\setminus \{v_{i_{0}}\}$.
Similarly,
$F(v_i,v_j)=E[X'\cup\{v_{i_{0}}\},Y\cup \{v_{i}\}]$
is a proper set. Since $F(v_i,v_j)$ separates $u$
and $w$, $F(v_i,v_j)$ is a $u-w$ proper cut in
$K_{n}$. If $u,w \in Y$, we can obtain a $u-w$
proper cut in the similar way. Thus, $c$ is a
proper disconnection coloring of $K_{n}$ and
so $pd(K_{n})\leq \lceil\frac{n}{2}\rceil$.
\end{proof}

\subsection{Outerplanar graphs}

An \emph{outerplanar graph} is a graph
that can be drawn in the plane without
crossings in such a way that all of the
vertices belong to the unbounded face of
the drawing.
A \emph{minor} of a graph $G$ is any
graph obtained from $G$ by means of a
sequence of vertex and edge deletions and
edge contractions.
There is a characterization
of outerplanar graphs as follows.

\begin{thm}\upshape\cite{D}\label{defineouterplanar}
A graph is outerplanar if and only if
it does not contain $K_4$ or $K_{2,3}$
as a minor.
\end{thm}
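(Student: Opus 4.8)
The plan is to reduce outerplanarity to ordinary planarity by adjoining a single apex vertex and then invoke Wagner's theorem. The central lemma I would establish first is that a graph $G$ is outerplanar if and only if the join $G \vee K_1$ is planar. For the forward direction I would take an outerplanar embedding of $G$ with all vertices on the unbounded face, place the new apex $w$ in the outer region, and join it to every vertex of $G$ by arcs routed through that region without crossings; this yields a planar embedding of $G \vee K_1$. The converse is the delicate step, discussed at the end.

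Granting the lemma, I would apply Wagner's theorem, namely that $G \vee K_1$ is planar if and only if it contains neither $K_5$ nor $K_{3,3}$ as a minor. It then remains to establish the translation
$$G \vee K_1 \text{ has a } K_5 \text{ or } K_{3,3} \text{ minor} \iff G \text{ has a } K_4 \text{ or } K_{2,3} \text{ minor}.$$
For the implication $(\Leftarrow)$, if $G$ has a $K_4$ minor then taking $w$ as a fifth branch vertex (adjacent to the other four, since $w$ is universal) produces a $K_5$ minor of $G \vee K_1$; similarly, a $K_{2,3}$ minor of $G$, with $w$ adjoined to the part of size two, realizes a $K_{3,3}$ minor of $G \vee K_1$, since the three branch sets of the smaller part together with $w$ form one side of size three.

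For the implication $(\Rightarrow)$, suppose $G \vee K_1$ has a $K_5$ minor with branch sets $B_1,\dots,B_5$. The apex $w$ lies in at most one of them, say $B_1$; if $w$ belongs to none, the whole minor already lives in $G$, and since $K_5$ contains $K_4$ as a subgraph, $G$ has a $K_4$ minor. Otherwise $B_2,\dots,B_5 \subseteq V(G)$ are connected and pairwise joined by edges of $G$ (the connecting edges cannot be incident with $w$, which sits in $B_1$), so they witness a $K_4$ minor of $G$. The $K_{3,3}$ case is identical: deleting the branch set containing $w$ leaves five branch sets inside $V(G)$ realizing $K_{3,3}$ minus a vertex, which is exactly $K_{2,3}$. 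Chaining the apex lemma, Wagner's theorem, and this translation yields both directions of the statement at once.

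The main obstacle is the converse half of the apex lemma: recovering an outerplanar embedding of $G$ from a planar embedding of $G \vee K_1$. Here I would delete $w$ and argue that the faces formerly incident with $w$ coalesce into a single face of the resulting embedding of $G$ whose boundary meets every vertex; this holds precisely because $w$ was adjacent to all of $V(G)$, so each vertex of $G$ occurred on some face incident with $w$ in the cyclic rotation around $w$. Declaring that merged face to be the unbounded one exhibits $G$ as outerplanar. Some care is needed when $G$ is disconnected or has cut vertices, where one argues blockwise on the $2$-connected blocks and then reassembles the embeddings along cut vertices, but the essential content is the local rotation scheme at the apex.
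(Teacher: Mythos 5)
Your argument is correct, but note that the paper does not prove this statement at all: it is quoted as a known characterization of outerplanar graphs with a citation to Diestel's book, so there is no internal proof to compare against. Your route --- showing $G$ is outerplanar iff $G \vee K_1$ is planar, invoking Wagner's theorem for $G \vee K_1$, and translating $K_5$/$K_{3,3}$ minors of $G \vee K_1$ into $K_4$/$K_{2,3}$ minors of $G$ by adding or discarding the branch set containing the apex --- is the standard textbook derivation and all its steps check out, including the delicate converse of the apex lemma (deleting $w$ merges the faces around it into one face whose boundary contains every neighbour of $w$, i.e.\ every vertex of $G$). The only quibble is that your closing caveat about disconnected graphs and cut vertices is unnecessary: the face-merging argument needs no block decomposition, since $G \vee K_1$ is always connected and the merged region is connected because consecutive faces around $w$ share an edge $wv$.
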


Let $v$ be an isolated vertex and $P_n$
a path of order $n$. We denote
$v\vee P_n$ by $F_{1,n}$, called
\emph{fan graphs}. We first show
that the proper disconnection number of
fan graph is 2, which will be used to
characterize the outerplanar graphs with
diameter 2.

\begin{lem}\label{pd(fan)}
$\mathit{pd}(F_{1,n})=2$.
\end{lem}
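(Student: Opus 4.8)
The plan is to prove the two inequalities separately; the lower bound is immediate, and the construction of a pd-coloring with two colours is where the work lies (assume $n\ge 2$, writing $P_n=u_1u_2\cdots u_n$ and $v$ for the hub). For the lower bound, note that $F_{1,n}$ contains the triangle on $\{v,u_1,u_2\}$, so by Proposition~\ref{cycle} we have $pd(C_3)=2$, and Lemma~\ref{4} gives $pd(F_{1,n})\ge pd(C_3)=2$; when $n=2$ the graph is exactly $K_3$, so equality already holds there.

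For the upper bound I must exhibit a $2$-coloring in which every pair of vertices is separated by a proper cut, and the shape of the construction is dictated by which cheap cuts are available. The only candidates are the endpoint stars $E_{u_1},E_{u_n}$ (two edges, proper iff the spoke and the path edge differ) and the \emph{window cut} $W_i=\{vu_i,vu_{i+1},u_{i-1}u_i,u_{i+1}u_{i+2}\}$ isolating $\{u_i,u_{i+1}\}$ from the rest. Indeed, any cut separating $v$ from an interior vertex $u_i$ must leave at most two path vertices on the side of $u_i$, since the crossing spokes are pairwise adjacent at $v$ and only two colours are available; hence the star $E_{u_i}$, having three mutually adjacent edges, can never be proper for an interior $u_i$, and a window is forced. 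A window $W_i$ also separates $\{u_i,u_{i+1}\}$ from every other $u_j$ and from $v$, and it is proper exactly when $c(vu_i)\ne c(vu_{i+1})$, $c(vu_i)\ne c(u_{i-1}u_i)$, and $c(vu_{i+1})\ne c(u_{i+1}u_{i+2})$.

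The key technical point is that the windows cannot all be made proper at once: if $W_i,W_{i+1},W_{i+2}$ were all proper, then $W_{i+1}$ forces $c(vu_{i+1})\ne c(vu_{i+2})$ while $W_i$ and $W_{i+2}$ force $c(u_{i+1}u_{i+2})$ to differ from both of these distinct colours, which is impossible. I would therefore only require the windows $W_i$ with $i\not\equiv 0\pmod 3$; these still cover every interior vertex, because $u_i$ lies in $W_{i-1}$ and in $W_i$, and $i-1,i$ cannot both be $\equiv 0\pmod 3$. Concretely, colour the spokes by $c(vu_i)=2$ if $i\equiv 2\pmod 3$ and $1$ otherwise, and the path edges by $c(u_iu_{i+1})=2$ if $i\equiv 0\pmod 3$ and $1$ otherwise. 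Then the two spokes bordering each \emph{skipped} window share colour $1$, which is exactly what dissolves the obstruction, and a direct check confirms that each selected $W_i$ meets the three conditions above.

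It remains to verify that every pair is separated by one of the chosen cuts: a pair $(v,u_k)$ or $(u_a,u_b)$ with $|a-b|\ge 2$ is split by any chosen window containing exactly one of the two vertices, and a consecutive pair $(u_i,u_{i+1})$ is split by $W_{i-1}$ or $W_{i+1}$, at least one of which is chosen. The main obstacle I expect is the bookkeeping at the two ends of the path, where the needed window may be absent or skipped depending on $n\bmod 3$; in those boundary cases I would fall back on the endpoint stars $E_{u_1},E_{u_n}$, checking from the colouring that their spoke and path edge get different colours. Assembling these cases yields a pd-coloring with two colours, so $pd(F_{1,n})\le 2$ and hence $pd(F_{1,n})=2$.
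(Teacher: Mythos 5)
Your proof is correct, but it takes a genuinely different route from the paper's. The paper disposes of the upper bound in three lines: $F_{1,n}$ is a subgraph of $W_{3k}$ for suitable $k$ (pad the path to length $3k$), Theorem~\ref{W_n} gives $pd(W_{3k})=2$, and Lemma~\ref{4} transfers the bound down to every fan. You instead build the $2$-coloring from scratch on the fan itself, via the ``window cuts'' $W_i$ isolating consecutive pairs $\{u_i,u_{i+1}\}$, and your periodic coloring is (up to a shift of indices) exactly the restriction of the paper's wheel coloring from Case~1 of Theorem~\ref{W_n}. What your approach buys is self-containedness and some genuine structural insight: the observation that three consecutive windows cannot all be proper with two colors, which forces the ``skip every third window'' pattern, is precisely the obstruction that makes $pd(W_n)=3$ when $3\nmid n$, so your argument explains the construction rather than merely importing it. What the paper's approach buys is brevity and the avoidance of all endpoint bookkeeping. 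Two small points to tidy if you write this up in full: (i) your fallback claim about the endpoint stars is only half true --- under your coloring $E_{u_1}=\{vu_1,u_1u_2\}$ gets colors $1,1$ and is \emph{not} proper, but this is harmless because $W_1$ and $W_2$ are both chosen and already separate $u_1$ from everything (the fallback is genuinely needed only at $u_n$, where $E_{u_n}$ does receive two distinct colors whenever $W_{n-1}$ is skipped or fails to separate $u_{n-1}$ from $u_n$); and (ii) the case $n=2$, where $F_{1,2}=K_3$ and the windows degenerate, should be checked separately (it does work under your coloring).
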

\begin{proof}
Let $P_n=v_1v_2\cdots v_n$ and $v_{0}$
be an isolated vertex. Let $F_{1,n}=v_{0}\vee P_n$.
Clearly, $F_{1,n}$ is a subgraph of $W_{n}$.
Firstly, we have that
$\mathit{pd}(F_{1,n})\geq\mathit{pd}(K_3)=2$
by Lemma \ref{4}. If $n=3k \ (k\geq 1)$,
then
$\mathit{pd}(F_{1,n})\leq \mathit{pd}(W_{n})=2$
by Lemma \ref{4} and Theorem \ref{W_n}.
Thus, $F_{1,3k}=2$. If $n=3k-2$ or $3k-1\ (k\geq 1)$,
then $\mathit{pd}(F_{1,3k-2})\leq \mathit{pd}(F_{1,3k-1})\leq \mathit{pd}(F_{1,3k})=2$
since $F_{1,3k-2}$ and $F_{1,3k-2}$ are
subgraphs of $F_{1,3k}$.
Hence, $\mathit{pd}(F_{1,n})=2$.
\end{proof}

\begin{thm}\label{outerplanar}
Let $G$ be an outerplanar graph. Then
$\mathit{pd}(G)=1$ if and only if $G$
is a triangle-free graph.
\end{thm}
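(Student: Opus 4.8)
The plan is to prove both implications, with essentially all the work in the converse. For the forward direction I would argue contrapositively: if $G$ contains a triangle, then $C_3$ is a connected subgraph of $G$, so by Lemma \ref{4} together with Proposition \ref{cycle} we get $pd(G)\geq pd(C_3)=2$; hence $pd(G)=1$ forces $G$ to be triangle-free. The substance is the converse, namely that every triangle-free outerplanar $G$ has $pd(G)=1$. By Theorem \ref{pd23} this is equivalent to producing, for each pair of vertices, a matching cut separating them, and by Lemma \ref{5} it suffices to verify $pd(B)=1$ for every block $B$ of $G$. A trivial block (a cut-edge) is a tree and so has $pd=1$, so I may assume $B$ is $2$-connected, triangle-free and outerplanar, and must show that any two vertices $x,y\in V(B)$ are separated by a matching cut.

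For the $2$-connected case I would use the structural fact that such a $B$ is its outer (Hamiltonian) cycle $C$ together with a family of pairwise non-crossing chords, with every bounded face a cycle; triangle-freeness forces each bounded face to have length at least $4$. I would then induct on $|E(B)|$. In the base case $B$ has no chord, so $B=C_n$ with $n\geq 4$, and Proposition \ref{cycle} gives $pd(C_n)=1$. For the inductive step, suppose $B$ has a chord; its weak dual is a tree with at least two leaves, so there is a \emph{leaf face} $f$ bounded by exactly one chord $e=uv$ and a path $u\,w_1 w_2\cdots w_s\,v$ of outer-cycle edges. Since $f$ has length $s+2\geq 4$ we have $s\geq 2$, and each internal vertex $w_i$ has degree $2$ in $B$ (it lies only on $f$ and the outer face). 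This ear, with its degree-$2$ interior, is the engine of the argument.

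The case split is according to whether a target lies among $w_1,\dots,w_s$. If a target, say $x=w_p$, is an internal ear vertex, I would separate it by \emph{isolating a length-two ear segment}: cutting the two edges bounding a consecutive pair such as $\{w_p,w_{p+1}\}$ detaches that pair as a component, and these two edges are non-adjacent, hence form a matching; when $y$ lies outside the ear this at once separates $x$ from $y$, and when $y=w_q$ is also internal one chooses the segment to contain exactly one of them, which is possible whenever $s\geq 3$ or $x,y$ are non-adjacent on the ear. If neither target is internal to the ear, I would pass to $B'=B-\{w_1,\dots,w_s\}$, which retains the chord $e$ (now a boundary edge) and is again $2$-connected, triangle-free and outerplanar with fewer edges; induction yields a matching cut $M'$ separating $x,y$ in $B'$. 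If $e\notin M'$ then $M'$ is already a matching cut of $B$, since the reattached ear hangs off the common side of $u$ and $v$. If $e\in M'$, I would reroute by replacing $e$ with the internal ear edge $w_1w_2$ (available as $s\geq 2$), which keeps the cut a matching because $w_1w_2$ meets no vertex of $B'$. The one remaining delicate configuration, where $x,y$ are the two degree-$2$ vertices of a $4$-face ($s=2$), I would reduce to separating the chord endpoints $u,v$ in $B'$ by induction (noting that $uv$ must then lie in the resulting cut) and adjoining the edge $w_1w_2$.

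The main obstacle is the bookkeeping of the inductive step rather than any single idea: one must check that each reduction preserves $2$-connectivity, triangle-freeness and outerplanarity, and—most delicately—lift a matching cut of $B'$ back to $B$ when the distinguished chord $e$ belongs to it. Keeping the side-assignments of $u$ and $v$ consistent, so that the reattached ear never creates a vertex incident with two crossing edges, is where the argument must be most careful; once the leaf-face ear structure and the degree-$2$ property of its interior vertices are in place, the rest is routine verification.
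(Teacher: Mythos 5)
Your overall strategy is the same as the paper's: reduce to $2$-connected blocks via Lemma \ref{5}, use the outer-cycle-plus-non-crossing-chords structure of a $2$-connected outerplanar graph, locate a leaf face of the weak dual (an ear bounded by one chord $uv$ and a path of outer edges whose internal vertices have degree $2$, of length at least $2$ by triangle-freeness), and induct by deleting the ear interior and lifting a matching cut back. The paper inducts on the number of vertices rather than edges and handles the ``one target inside the ear, one outside'' case by cutting off the whole ear interior with $\{e_i,e_{j-1}\}$ rather than your two-edge segment isolation, but these are cosmetic differences.

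There is, however, one step that fails as written: when neither target is in the ear interior and the chord $e=uv$ lies in the matching cut $M'$ of $B'$, you propose to ``reroute by replacing $e$ with the internal ear edge $w_1w_2$,'' i.e.\ to use $(M'\setminus\{e\})\cup\{w_1w_2\}$. But then the chord $uv$ is no longer deleted from $B$, and if $u$ lies on $x$'s side and $v$ on $y$'s side of $B'-M'$ (which is exactly the situation in which $e$ matters), the surviving edge $uv$ reconnects $x$ to $y$; deleting $w_1w_2$ only severs the detour through the ear, not the chord itself. The correct move --- and the one the paper makes, taking $F_{B'}(u,v)\cup\{e_{i+1}\}$ --- is to \emph{keep} $e$ in the cut and \emph{add} an internal ear edge such as $w_1w_2$: this kills both $u$--$v$ connections (the chord and the ear path), and remains a matching since $w_1w_2$ is vertex-disjoint from $B'$ and in particular from $e$ (here one uses $s\geq 2$, i.e.\ triangle-freeness). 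Note that you do apply the correct version of this move in your final $s=2$ configuration, where you keep $uv\in M'$ and adjoin $w_1w_2$; the same augmentation, not a replacement, is what is needed in the general lifting step. With that one correction your argument goes through and coincides in substance with the paper's proof.
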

\begin{proof}
Let $\mathit{pd}(G)=1$. Assume, to the
contrary, that $G$ contains a copy of
$K_3$.
Then $\mathit{pd}(G)\geq \mathit{pd}(K_3)=2$
by Theorem \ref{4}, a contradiction. Then
$G$ is a triangle-free outerplanar graph.

For the converse, let $B$ be the maximum
block of $G$ with $t$ vertices. Then $B$ is
a triangle-free outerplanar graph. By Lemma
\ref{5}, it turns to show that $pd(B)=1$.
If $B$ is trivial, then $pd(B)=1$. If $B$
is not trivial, then $t\geq 4$. Suppose that
$B$ is a cycle, $pd(B)=1$ by Proposition
\ref{cycle}. So it remains to consider that
$B$ is not unicyclic and $t\geq 6$. We
proceed by induction on $t$. When $t= 6$,
it has only one chord. Obviously, it is true.
When $t\geq7$,
let $C=v_1e_1v_2e_2\cdots e_{t-1}v_te_tv_1$
be the boundary of the outer face in $B$.
Choose a chord $v_iv_j$ such that the internal vertices
of $P=v_ie_iv_{i+1}e_{i+1}\cdots e_{j-1}v_j(j\geq i+3)$
have degree 2 in $B$. We use the elements of $\mathbb{Z}_{n}$ of
integer mod $n$ to express subscripts.
Let $C'=P+\{v_iv_j\}$ and $B'$ be a graph
by removing internal vertices of $P$ from $B$.
Then $pd(B')=1$ by induction hypothesis.
For any two vertices $u$, $v$ in $B$, if $u$,
$v\in B'$, there is a $u-v$ matching cut
$F_{B'}(u,v)$ in $B'$ by Theorem \ref{pd23}.
Then $F_{B'}(u,v)\cup \{e_{i+1}\}$ is a $u-v$
matching cut in $B$. If $u$, $ v\in B\setminus B'$,
then $F_{B'}(v_i,v_j)\cup \{e\}$ is a $u-v$
matching cut in $B$ where $e$ is an $u-v$ edge
cut in $P$. If $u\in B'$, $v\in B\setminus B'$,
then $\{e_i,e_{j-1}\}$ is a $u-v$ matching cut
in $B$. Therefore, $pd(B)=1$.
\end{proof}

Now we characterize the outerplanar graphs
$G$ with $D(G)=2$. We first construct
some graph classes. Let $\mathcal{D}$ be a family of graphs
obtained from $W_n=K_1\vee C_n$ by deleting
$t$ $(1\leq t\leq n-2)$ edges of $C_n$.
Let $z$ be an isolated vertex
and $v_1v_2v_3v_4v_5$ be a path of length 4.
Then join $z$ with $v_1,v_2,v_4$ and $v_5$.
We denote the resulting graph by $F^-_{1,5}$.
Let $y$ be an isolated vertex and $v_1v_2v_3v_4$
be a path of length 3. Then join $y$ with $v_1,v_3$
and $v_4$. We denote the resulting graph by
$F^-_{1,4}$. We denote $v_1v_2v_3v_4v_5v_6v_1$
by $C_6$.
Then let $F'=C_6\cup\{v_1v_3,v_3v_5,v_1v_5\}$.

\begin{thm}\label{diam}
Let $G$ be an outerplanar graph with $D(G)=2$.
Then $\mathit{pd}(G)=2$ if and only if
$G\in \mathcal{D}$ or $G\cong F^-_{1,5}$
or $F^-_{1,4}$ or $F'$.
\end{thm}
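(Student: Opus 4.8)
For sufficiency I would show each listed graph has $pd=2$. The lower bound $pd(G)\ge 2$ is immediate for every candidate, since each contains a triangle (namely $v_0$ together with any surviving rim edge in a member of $\mathcal{D}$, the triangle $zv_1v_2$ in $F^-_{1,5}$, the triangle $yv_3v_4$ in $F^-_{1,4}$, and $v_1v_3v_5$ in $F'$), so Theorem \ref{outerplanar} forbids $pd(G)=1$. For the upper bound, every member of $\mathcal{D}$ is a connected spanning subgraph of the fan $F_{1,n}=v_0\vee P_n$ (refill the deleted rim edges to turn the spanning linear forest of the rim into a Hamiltonian path), so $pd(G)\le pd(F_{1,n})=2$ by Lemma \ref{4} and Lemma \ref{pd(fan)}. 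For the three sporadic graphs I would exhibit an explicit $2$-edge-coloring and verify directly that every vertex pair admits a proper cut; each graph is small enough that this is a short check.

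For necessity, assume $G$ is outerplanar with $D(G)=2$ and $pd(G)=2$. Since $pd(G)>1$, Theorem \ref{outerplanar} forces a triangle in $G$. The governing dichotomy is whether $G$ has a dominating vertex $v_0$ (one adjacent to all others). If it does, I claim $G-v_0$ is a linear forest: no vertex of $G-v_0$ can have three neighbours inside $G-v_0$ (else $v_0$ and that vertex would share three common neighbours, a $K_{2,3}$), so $\Delta(G-v_0)\le 2$ by Theorem \ref{defineouterplanar}; and $G-v_0$ contains no cycle $C_k$, since $v_0\vee C_k=W_k$ is never outerplanar for $k\ge 3$. Hence $G-v_0$ is a disjoint union of paths, $G=v_0\vee L$, and a suitable cyclic arrangement of the paths of $L$ realizes $G$ as $W_n$ with the inter-path rim edges deleted, placing $G$ in $\mathcal{D}$. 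The triangle guarantees $L$ has at least one edge, excluding the star $K_{1,n}$, whose $pd$ equals $1$.

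The substantial case is when $G$ has no dominating vertex; here the plan is to force the order of $G$ to be small and then enumerate. Fix a vertex $v$ of maximum degree and analyse $N(v)$: outerplanarity makes $G[N(v)]$ a linear forest, because a vertex with three neighbours inside $N(v)$, or a cycle inside $N(v)$, would produce $K_{2,3}$ or a wheel through $v$; and each non-neighbour of $v$ is adjacent to at most two vertices of $N(v)$, three again giving $K_{2,3}$ with $v$. Since $v$ is not dominating there is a non-neighbour, and the diameter-$2$ condition forces every vertex of $N(v)$ to lie within distance $2$ of it; tracing these common neighbours through the at-most-two attachment points and using $\Delta(G[N(v)])\le 2$ bounds $|N(v)|$, hence $|V(G)|$, by a small constant. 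With the order bounded, I would enumerate the finitely many diameter-$2$ outerplanar graphs with no dominating vertex, discard the triangle-free ones ($pd=1$) and those carrying a pair of vertices with $pd\ge 3$, and confirm that exactly $F^-_{1,5}$, $F^-_{1,4}$ and $F'$ survive.

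The main obstacle is precisely this no-dominating-vertex case: extracting a clean explicit bound on $|V(G)|$ from the neighbourhood analysis, and then the finite but delicate elimination ruling out every diameter-$2$ outerplanar graph other than the three sporadic ones. The colour-forcing consequence of Lemma \ref{pd-k4} (that in a copy of $K_4-e$ the two non-incident edges must receive the same colour in any $2$-coloring) will be the chief tool for certifying $pd\ge 3$ on the discarded graphs, since it propagates colour constraints along overlapping triangles until some vertex pair is left with no proper cut.
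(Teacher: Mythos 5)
Your sufficiency argument and your treatment of the dominating-vertex case line up with the paper: triangles give $pd\ge 2$, membership of $\mathcal{D}$ in the fan yields the upper bound via Lemmas \ref{4} and \ref{pd(fan)}, and your observation that $G-v_0$ must be a linear forest (no vertex of degree three there and no cycle there, by the $K_{2,3}$ and $K_4$ minor obstructions) is exactly the paper's Subcase 2.1. The paper first splits off the cut-vertex case, but, as you note, a cut vertex in a diameter-$2$ graph is dominating, so that is the same case in different clothing.

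The genuine gap is the no-dominating-vertex case, which is the bulk of the paper's proof and which you leave as an unexecuted plan, with two concrete problems. First, the assertion that the neighbourhood analysis "bounds $|V(G)|$ by a small constant" is precisely the crux and is not established by what you write: the paper proves it by splitting on the number of non-neighbours of the maximum-degree vertex $r$. With exactly one non-neighbour $x$ it shows $|N(x)|=2$ and that $n\ge 7$ would force two of $x_3,x_4,x_5$ onto a common attachment vertex in $N(x)$, creating a $K_{2,3}$ minor, whence $n\le 6$ and only $F^-_{1,4}$, $F^-_{1,5}$, $F'$ survive; with at least two non-neighbours a longer analysis shows every configuration produces a $K_{2,3}$ or $K_4$ minor or violates $D(G)=2$, except $C_5$, which is discarded because $pd(C_5)=1$. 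Second, your proposed chief tool for the final elimination --- propagating colour constraints via Lemma \ref{pd-k4} to certify $pd\ge 3$ --- is aimed at the wrong target: in the paper's necessity argument no graph is ever discarded for having $pd\ge 3$; everything that passes the outerplanarity and diameter tests and is not one of the listed graphs is either triangle-free (so $pd=1$ by Theorem \ref{outerplanar}) or is $C_5$. The elimination is entirely structural (forbidden minors plus the diameter condition), so the colour-forcing machinery would not close the argument. A smaller point worth flagging: if the linear forest $L$ in your dominating-vertex case has exactly one edge, the resulting graph is $W_n$ with $n-1$ rim edges deleted, which falls outside the paper's definition of $\mathcal{D}$ (which requires $t\le n-2$); that boundary issue sits in the statement itself, but a complete proof must address it.
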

\begin{proof}
\emph{Sufficiency.} Since there is at least
one triangle $K_3$ for every $G\in \mathcal{D}$,
it is clear to see that
$\mathit{pd}(G)\geq\mathit{pd}(K_3)=2$ by
Theorem \ref{4} and Theorem \ref{t2}.
Meanwhile, $G$ is a subgraph of a fan graph,
therefore, $\mathit{pd}(G)\leq 2$ by Lemma
\ref{pd(fan)} and Lemma \ref{4}.
Hence, $\mathit{pd}(G)=2$ for $G\in \mathcal{D}$.
Similarly, $\mathit{F^-_{1,5}}=\mathit{F^-_{1,4}}=2$.
For the graph $F'$, $\mathit{pd}(F')\geq2$.
We now assign a 2-edge-coloring
$c:$ $E(F')\rightarrow \{1,2\}$ for $F'$.

Let $c(v_1v_5)=c(v_3v_4)=c(v_2v_3)=2$ and the
remaining edges are colored by 1. Thus, for
every pair of vertices in $F'$, there exists
a proper cut $F[V_1,V(F')\setminus V_1]$, where
$V_1=\{v_1,v_2,v_3\}$ and $V(F')\setminus
V_1=\{v_4,v_5,v_6\}$ or $V_1=\{v_1,v_2,v_6\}$
and $V(F')\setminus V_1=\{v_4,v_5,v_3\}$ or
$V_1=\{v_1,v_2,v_3,v_5,v_6\}$ and $V(F')\setminus
V_1=\{v_4\}$ or $V_1=\{v_1,v_3,v_4,v_5,v_6\}$ and
$V(F')\setminus V_1=\{v_2\}$.
Hence, $\mathit{pd}(F')=2$.

\emph{Necessity.} Suppose that $\mathit{pd}(G)=2$.
Clearly, there is at most one cut-vertex since
$D(G)=2$. Otherwise $D(G)\geq3$. We now discuss
it by two cases.

\emph{Case 1.} Suppose that there exists exactly
one cut-vertex. Then the remaining vertices are
adjacent to the cut vertex. Clearly, it follows
that $G\in\mathcal{D}$.

\emph{Case 2.} Suppose that there is not a cut
vertex. Then $\delta(G)\geq 2$. Let $r$ be a
vertex with maximum degree and $N(r)=\{x_1,x_2,\cdots,x_\Delta\}$.

\emph{Subcase 2.1.} $d(r)=n-1$.
Since there is no cut vertex in $G$, the
induced subgraph $G[x_1,\cdots,x_{n-1}]$ is
connected. We claim that $G[x_1,\cdots, x_{n-1}]$
is a path. Otherwise, it is a tree with a vertex
$v$ of degree at least three, or it contains a
cycle. Thus, $G$ contains a minor of $K_{2,3}$ or $K_4$, which is a
contradiction by Theorem \ref{defineouterplanar}.
Clearly, it follows that $G\in \mathcal{D}$.

\emph{Subcase 2.2.} $d(r)=n-2$.
Let $x$ be not adjacent to $r$.
Suppose that $|N(x)|\geq3$. Then, there is a minor of $K_{2,3}$ in $G$. A contradiction. Thus, $|N(x)|=2$.
We now illustrate our claim that $G\cong F^-_{1,5}$
or $F^-_{1,4}$ or $G\cong F'$.
Without loss of generality, let $x_1x,x_2x$
be two edges of $G$. Suppose that $n\geq7$. Since
$D(G)=2$, the vertices $x_3,x_4,x_5$ are adjacent to $x_1$ or
$x_2$. Then there are at least two vertices adjacent
to the same vertex $x_1$ (or $x_2$). Then we obtain
a minor of $K_{2,3}$ in $G$.
Suppose that $n=5$. Since $d(x_3)\geq 2$, we have
exactly one edge $x_2x_3\in E(G)$ or $x_1x_3\in E(G)$.
Otherwise, there is a minor of $K_4$. So
$G\cong F^-_{1,4}$. Suppose that $n=6$. The vertices $x_3$, $x_4$ have
no common neighbor. Otherwise, there
is a minor of $K_{2,3}$ in $G$. Since $D(G)=2$, there exist
edges $x_2x_3$, $x_1x_4$ (or $x_1x_3$, $x_2x_4$).
Hence, $G\cong F^-_{1,5}$ or $G\cong F'$.

\emph{Subcase 2.3.} $d(r)\leq n-3$. Let $y_1$ and
$y_2$ be two vertices which are nonadjacent to $r$.
Assume that $y_1y_2\in E(G)$.
If $|N(r)\cap N(\{y_1,y_2\})|\geq3$, then there
is a minor of $K_{2,3}$ in $G$.
Thus, $|N(r)\cap N(\{y_1,y_2\})|\leq2$.
If $N(r)\cap N(\{y_1,y_2\})=\emptyset$, then
$d(r,y_{1})\geq 3$. A contradiction. If
$|N(r)\cap N(\{y_1,y_2\})|=1$, then when there
is exactly one vertex of $\{y_1,y_2\}$ which is
adjacent to one vertex of $N(r)$, it contradicts
with $D(G)=2$. When both $y_1$ and $y_2$ are
adjacent to one vertex of $N(r)\cap N(\{y_1,y_2\})$,
there exists $x_i$ $(i\neq1)$
which is not adjacent to $x_1$ since $r$ is a vertex with maximum degree. Then there exists
$y_3\in V(G)\setminus (N[r]\cup \{y_1,y_2\})$ such
that $y_3x_i,y_3y_1\in E(G)$. Regard $y_3$ as $y_2$. Then
$|N(r)\cap N(\{y_1,y_2\})|=2$. Thus we only
consider the case that $|N(r)\cap N(\{y_1,y_2\})|=2$.
Without loss of generality, let $x_1y_1,x_2y_2\in E(G)$.

When $|N(r)|=2$, clearly, $G\cong C_5$, contradicting
that $pd(G)=2$. If $|N(r)|\geq 5$, then $x_3,x_4$ and $x_5$
belong to $N(r)$ but not $N(\{y_1,y_2\})$.
So, there are at least two vertices of
$x_3,x_4$ and $x_5$ adjacent to one vertex of $x_1$
and $x_2$, which induces a minor of $K_{2,3}$.
Thus, $|N(r)|=3$ or $4$.
If there exists a vertex $y\notin N(r)$ ($y\neq y_1,y_2$) such that
$x_3y \in E(G)$, then it contains a minor of $K_{2,3}$. If $|N(r)|=3$,
then $N(x_3)\cap N(\{y_1,y_2\})\neq \emptyset$ since
$D(G)=2$. However, there is at most one vertex in $N(r)\cap N(\{y_1,y_2\})$ adjacent
to $x_3$. Otherwise, there is a minor of $K_4$ in $G$.
Suppose that $x_3$ is adjacent
to one vertex of $\{x_1,x_2\}$, say $x_1$, then $D(G)=3$
since $r$ has the maximum degree. A contradiction.
If $|N(r)|=4$, then $x_3$ and $x_4$ are not
adjacent to one common vertex of $x_1$ and $x_2$.
Otherwise, it produces a minor of $K_{2,3}$.
Then let $x_2x_3,x_1x_4\in E(G)$. In the sake of
$D(G)=2$, the pair of vertices $x_3$ and $y_1$
has at least one common neighbor and so does the
pair of $x_4$ and $y_2$. However, it produces a
minor of $K_4$.

Assume that $V(G)\setminus N[r]$ is an independent set.
Clearly, $y_1$ and $y_2$ have at most two neighbors
in $\{x_i| 1\leq i\leq n-3\}$, respectively. Since
$D(G)=2$, $y_1$ and $y_2$ have at least one common
neighbor in $\{x_i| 1\leq i\leq n-3\}$. If they have
at least two common neighbors, then $G$ contains
a minor of $K_{2,3}$, which is a contradiction.
Thus, $y_1$ and $y_2$ have exactly one common
neighbor in $\{x_i| 1\leq i\leq n-3\}$. Without
loss of generality, let $x_1,x_2\in N(y_1)$ and
$x_1\in N(y_2)$. Then $y_2$ has a neighbor $x$
where $x\neq x_1,x_2$, without loss of generality,
let $x=x_3$, then there is an edge $x_1x_3$ or
$x_2x_3$. Otherwise, it is a contradiction to
$D(G)=2$. If $x_2x_3\in E(G)$, then there is a
minor of $K_4$. If $x_1x_3\in E(G)$, then there
is the edge $x_1x_2$ since $D(G)=2$. Since
$d(r)\geq d(x_1)\geq 4$, there exists another
neighbor $x_4$ of vertex $r$, which is not adjacent
to $x_1$. Otherwise, it produces a minor of $K_{2,3}$.
In view of $D(G)=2$, then $x_2x_4$, $x_3x_4\in E(G)$.
There is a minor of $K_4$, a contradiction.
\end{proof}

\subsection{An upper bound and an extremal problem}

We first consider the upper bound of the proper
disconnection number for a graph of order $n$ and chromatic
index $\chi'(G)$.

\begin{thm}\label{pd24}
If $G$ is a nontrivial connected graph, then $pd(G)\leq \chi'(G)-1$.
\end{thm}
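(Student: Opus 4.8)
The plan is to start from a proper edge-colouring with $\chi'(G)$ colours (which exists by definition) and to show that merging two of its colour classes already yields a proper disconnection colouring with $\chi'(G)-1$ colours; this sharpens the trivial bound $pd(G)\le\chi'(G)$ coming from Observation \ref{observation}. We may assume $\chi'(G)\ge 2$, since $\chi'(G)=1$ forces $G\cong K_2$. Fix a proper colouring $c\colon E(G)\to\{1,\dots,\chi'(G)\}$ and let $c'$ be obtained from $c$ by recolouring every edge of colour $\chi'(G)$ with colour $1$. Then $c'$ uses only $\chi'(G)-1$ colours, and the whole task is to verify that $c'$ is a pd-colouring.

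The first thing I would record is a structural observation. Since each colour class of a proper colouring is a matching, in $c'$ every vertex is incident with at most two edges of colour $1$ (one inherited from the old colour $1$, one from the old colour $\chi'(G)$) and with at most one edge of every other colour. Consequently, if two adjacent edges receive the same colour under $c'$, that colour must be $1$; equivalently, an edge-cut $F$ fails to be proper only because some vertex sends two colour-$1$ edges into $F$. Let $G_1$ be the spanning subgraph formed by the colour-$1$ edges of $c'$. Being the union of two matchings, $G_1$ has maximum degree at most $2$, so each of its components is a path or an (even) cycle. Call a vertex \emph{bad} if it has degree $2$ in $G_1$; these are exactly the internal vertices of the $G_1$-paths together with all vertices of the $G_1$-cycles, and they are the only vertices that could spoil a cut.

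With this in hand, I produce the separating cuts from vertex bipartitions. For a pair $u,w$ I use the cut $E[S,V(G)\setminus S]$ for a suitable $S\ni u$ with $w\notin S$; by the observation above this cut is proper precisely when no bad vertex has \emph{both} of its $G_1$-edges crossing between $S$ and $V(G)\setminus S$. If $u$ and $w$ lie in different components of $G_1$, I take $S$ to be a union of whole $G_1$-components, with the component of $u$ on one side and that of $w$ on the other; then no $G_1$-edge is cut at any bad vertex, so the boundary is a proper $u$–$w$ cut. If $u$ and $w$ lie in the same component $Q$, I choose $S\cap Q$ to be a contiguous arc of the path or cycle $Q$ that contains $u$ but not $w$ (placing every other $G_1$-component wholly on one side). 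A contiguous arc changes side only at its ends, and one checks that any vertex at such a boundary still agrees with a neighbour inside its arc, the only exception being an arc-endpoint of a path, which is a degree-$1$ (hence good) vertex of $G_1$. Thus no bad vertex is cut through both of its $G_1$-edges, and the boundary is again a proper $u$–$w$ cut.

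The routine part is the arc bookkeeping in the same-component case (taking, e.g., $S\cap Q=\{z_0,\dots,z_{j-1}\}$ when $Q=z_0\cdots z_\ell$ and $w=z_j$, and splitting a cycle into two arcs each of length at least two), which I expect to cause no trouble once the bad vertices have been localised on $G_1$. The genuine obstacle, and the reason the bound can beat $\chi'(G)$, is precisely that the naive star cuts $E_u$ need no longer be proper after the merge — a full-degree vertex can become bad — so one is forced to separate bad pairs by non-star bonds; the structural fact that the repeated colour lives on a disjoint union of paths and cycles is exactly what makes such bonds available. Assembling the two cases then shows that $c'$ is a proper disconnection colouring, and hence $pd(G)\le\chi'(G)-1$.
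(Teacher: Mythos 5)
Your proof is correct, and it opens exactly as the paper's does: take a proper edge-colouring with $\chi'(G)$ colours and merge colour $\chi'(G)$ into colour $1$. Where you diverge is in how the proper cuts are produced. The paper stays local: for a pair $x,y$ it uses the star $E_x$ when $x$ meets at most one colour-$1$ edge, and otherwise the cut $E_x\cup E_{v_i}\setminus\{xv_i\}$ separating $\{x,v_i\}$ from everything else, where $xv_i$ is one of the two colour-$1$ edges at $x$ (switching to $v_j$ if $y=v_i$). You instead argue globally: the doubled colour class spans a subgraph $G_1$ of maximum degree two, so its components are paths and even cycles, and you cut along bonds $E[S,V\setminus S]$ chosen so that no degree-$2$ vertex of $G_1$ has both of its $G_1$-edges crossing. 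Both routes work. Your properness criterion for a bond is clean and makes the verification essentially mechanical; the price is the arc bookkeeping in the same-component case, which you handle correctly by insisting that a cycle be split into two arcs of length at least two (always possible since these cycles are even, hence of length at least four) and by noting that a short arc at a path end is harmless because its boundary vertex has degree one in $G_1$. The paper's cuts are shorter to state but hide a small subtlety that your bond formulation avoids entirely: one must check that a common neighbour $u$ of $x$ and $v_i$ cannot contribute two adjacent colour-$1$ edges to $E_x\cup E_{v_i}\setminus\{xv_i\}$; this does follow from properness of the original colouring at $x$ and $v_i$, but it is not spelled out in the paper.
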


\begin{proof}
By Vizing's Theorem, define a proper edge
coloring $c$: $E(G)\rightarrow \chi'(G)$.
Then we redefine an edge-coloring $c'$ of
$G$ as follows: For any edge $e$ of $G$,
if $c(e)=\chi'(G)$, then $c'(e)=1$;
otherwise, $c'(e)=c(e)$. Let $x$, $y$ be
two vertices of $G$. Assume
$N(x)=\{v_1,v_2,\cdots,v_{d(x)}\}$.
Obviously, at most two incident edges
of $x$ are assigned the color $1$. If
there exists at most one incident edge of
$x$ with color $1$, then the edges incident
with $x$ form an $x-y$ proper cut. If there
exist two edges with color $1$, then we may
assume $c'(xv_i)=c'(xv_j)=1\ (1\leq i<j\leq d(x))$.
If $y\in\{v_i,v_j\}$ and let
$t=\{v_i,v_j\}\setminus y$, then
$E_x\cup E_t\setminus\{xt\}$ is an $x-y$
proper cut. Otherwise,
$E_x\cup E_{v_i}\setminus\{xv_i\}$ is an
$x-y$ proper cut. Thus, $c'$ is a proper
disconnection coloring of $G$ and so
$pd(G)\leq \chi'(G)-1$.
\end{proof}

According to Theorem \ref{t2} and Theorem
\ref{pd24}, we get the following result.

\begin{thm}\label{pd25}
 Let $G$ be a nontrivial connected graph
 of order $n$. Then
 $pd(G)\leq min\{ \chi'(G)-1, \left \lceil \frac{n}{2} \right \rceil\}$,
 and the bound is sharp.
\end{thm}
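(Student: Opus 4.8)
The plan is to assemble the upper bound from two monotonicity facts and then exhibit a family that meets it. The first half, $pd(G)\le \chi'(G)-1$, is exactly Theorem~\ref{pd24}, so nothing new is needed there. For the second half I would use the observation that every connected graph $G$ of order $n$ sits inside $K_n$ as a connected (indeed spanning) subgraph. Lemma~\ref{4} then gives $pd(G)\le pd(K_n)$, and Theorem~\ref{t2} evaluates $pd(K_n)=\lceil n/2\rceil$. Combining the two inequalities immediately yields $pd(G)\le \min\{\chi'(G)-1,\lceil n/2\rceil\}$, which is the claimed bound.

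For sharpness I would check that the complete graph already realizes the right-hand minimum. Indeed $pd(K_n)=\lceil n/2\rceil$ by Theorem~\ref{t2}, while $\chi'(K_n)=n-1$ for $n$ even and $\chi'(K_n)=n$ for $n$ odd; in either case $\chi'(K_n)-1\ge n-2\ge \lceil n/2\rceil$ once $n\ge 4$, so the minimum equals $\lceil n/2\rceil=pd(K_n)$ and the bound is attained. To argue that the other term $\chi'(G)-1$ is genuinely needed (i.e. that the minimum is not always the $\lceil n/2\rceil$ term), I would point to a graph such as $K_{3,3}$: here $\chi'(K_{3,3})-1=2<3=\lceil 6/2\rceil$, while $pd(K_{3,3})=\lceil 3/2\rceil=2$ by Theorem~\ref{K_{m,n}}, so equality holds with the chromatic-index term binding.

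There is essentially no hard analytic step; the whole argument is a packaging of earlier results, and the real content has already been spent in proving Theorems~\ref{pd24} and~\ref{t2}. The one point that requires care is the subgraph-monotonicity input: Lemma~\ref{4} is stated for \emph{connected} subgraphs, so I must invoke it with $G$ itself (connected by hypothesis) regarded as a spanning subgraph of $K_n$, rather than with some disconnected intermediate graph. A secondary cautionary note is the degenerate case $n=2$, where $K_2$ has $\chi'(K_2)-1=0$ while $pd(K_2)=1$; this is consistent with the standing assumption, already implicit in Theorem~\ref{pd24}, that $G$ contains two adjacent edges so that $\chi'(G)\ge 2$, and I would accordingly restrict attention to $n\ge 3$.
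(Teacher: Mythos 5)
Your proof is correct and follows essentially the same route as the paper: Theorem~\ref{pd24} for the first term, and $G\subseteq K_n$ together with Lemma~\ref{4} and Theorem~\ref{t2} for the second, with sharpness witnessed by complete graphs (the paper uses paths and even cycles rather than $K_{3,3}$ for the $\chi'(G)-1$ term, but both work). Your remark about $n=2$ is a genuine catch --- $K_2$ has $pd(K_2)=1>0=\chi'(K_2)-1$, an edge case the paper silently ignores --- and your restriction to $n\geq 3$ handles it cleanly.
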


\begin{proof}
By Theorem \ref{pd24}, $pd(G)\leq\chi'(G)-1$.
Since $G$ is a connected subgraph of $K_n$,
$pd(G)\leq \left \lceil \frac{n}{2} \right \rceil$
by Theorem \ref{t2}. For the sharpness, $\left \lceil \frac{n}{2} \right \rceil$
can be reached by complete graphs, and $\chi'(G)-1$
can be reached by even cycles and paths with at least 3 vertices.
\end{proof}

Now we investigate the following extremal problem:
For given $k, n$ of positive integers with
$1\leq k\leq \lceil\frac{n}{2}\rceil$, what
is the minimum possible size of a connected
graph $G$ of order $n$ such that the proper
disconnection number of $G$ is $k$.

\begin{lem}\label{match}
Let $G$ be a connected graph of order $n$.
Let $M$ be a set of matching edges.
Then $pd(G)\leq  \max\{pd(G_i)|1\leq i\leq t\}+1$,
where $G_i$ is a connected
component  and $t$ is the number of components of $G-M$.
\end{lem}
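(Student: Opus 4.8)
The plan is to construct an explicit proper-disconnection coloring of $G$ using $k+1$ colors, where $k=\max\{pd(G_i)\mid 1\le i\le t\}$. First I would fix, for each component $G_i$ of $G-M$, a pd-coloring $c_i\colon E(G_i)\to[k]$ (possible since $pd(G_i)\le k$), and then define a coloring $c\colon E(G)\to[k+1]$ by $c(e)=c_i(e)$ for $e\in E(G_i)$ and $c(e)=k+1$ for every $e\in M$. Because $M$ is a matching, no two of its edges are adjacent, so assigning them all the single new color $k+1$ never creates a monochromatic adjacent pair; moreover $k+1$ differs from every color used inside the components. It then remains to verify that $c$ is a pd-coloring by exhibiting, for each pair $x,y\in V(G)$, a proper $x$–$y$ cut.

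If $x$ and $y$ lie in different components, say $x\in G_i$, then every edge of $G$ joining $V(G_i)$ to $V(G)\setminus V(G_i)$ belongs to $M$, since $G_i$ is a component of $G-M$. Hence $E_G[V(G_i),V(G)\setminus V(G_i)]\subseteq M$ is a matching cut separating $x$ from $y$, and it is automatically proper (it contains no two adjacent edges at all).

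The remaining, and main, case is when $x$ and $y$ lie in the same component $G_i$. Here I would take an $x$–$y$ proper cut $F_i$ in $G_i$ and let $A$ be the vertex set of the component of $x$ in $G_i-F_i$. Then the edge boundary $E_{G_i}[A,V(G_i)\setminus A]$ is contained in $F_i$, so it is still proper (no two of its adjacent edges share a color, as this already holds for $F_i$) and still separates $x$ from $y$ in $G_i$. I would then enlarge it to $E_G[A,V(G)\setminus A]$ inside $G$. This enlarged set consists of the within-component edges $E_{G_i}[A,V(G_i)\setminus A]$, which are pairwise properly colored, together with crossing edges of $M$, which all carry color $k+1$, are pairwise non-adjacent, and differ in color from every within-component edge. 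Hence no two adjacent edges of the enlarged set share a color, and removing it separates $A$ (hence $x$) from its complement (hence $y$), so it is a proper $x$–$y$ cut in $G$.

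The main obstacle is precisely this last verification: a proper cut of a single component need not remain a cut of $G$ once the matching edges reconnect the components, so one must add exactly the crossing edges of $M$ and argue that their uniform extra color $k+1$ keeps the enlarged set proper. Once both cases are handled, $c$ witnesses $pd(G)\le k+1=\max\{pd(G_i)\mid 1\le i\le t\}+1$.
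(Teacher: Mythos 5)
Your proposal is correct and follows essentially the same route as the paper: color each component with its own pd-coloring, give all of $M$ the one new color $k+1$, use (a subset of) $M$ alone to separate vertices in different components, and combine a within-component proper cut with the $M$-edges (which stay proper because they are pairwise non-adjacent and carry a color unused inside the components) for vertices in the same component. The paper simply takes the cruder set $F_{G_i}(x,y)\cup M$ where you take the exact edge boundary of the side $A$ containing $x$; both work for the same reason.
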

\begin{proof}
Let $\{G_1,G_2, \ldots, G_t\}$ be $t$
components of $G-M$ and let
$\ell=\max\{\textnormal{pd}(G_i)| 1\leq i\leq t\}$.
Let $c_i$ be a pd-coloring of $G_i$.
Let $F_{G_i}(u,v)$ be $u, v$ proper cut in $G_i$.
We define an edge coloring $c$:
$E(G)\rightarrow[\ell+1]$ of $G$ by
$c(e)=c_i(e)$ if $e\in E(G_i)$ and
$c(e)=\ell+1$ if $e\in M$.
Let $x,y$ be two vertices of $G$.
If $x,y\in G_i$, then $F_{G_i}(x,y)\cup M$ is
an $x,y$ proper cut in $G$.
If $x\in G_i$, $y\in G_j$ where $i\neq j$, then
$M$ is an $x,y$ proper cut in $G$.
Hence, $pd(G)\leq \max\{pd(G_i)|1\leq i\leq t\}+1$.
\end{proof}

\begin{thm}
For integers $k$ and $n$ with
$1\leq k\leq \lceil\frac{n}{2}\rceil$,
the minimum size of a connected graph $G$ of
order $n$ with $\textnormal{pd}(G)=k$ is
\begin{equation*}
|E(G)|_{min}=
\left\{
  \begin{array}{ll}
    n-1, & \hbox{ if  $k=1$,}\\
    n+2k-4, & \hbox{ if $k\geq 2$. }\\

    \end{array}
\right.
\end{equation*}
\end{thm}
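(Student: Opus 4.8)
The plan is to handle $k=1$ separately and then, for $k\ge 2$, establish the matching upper (constructive) and lower bounds. The case $k=1$ is immediate: a connected graph of order $n$ has at least $n-1$ edges, with equality exactly for trees, and every tree has $pd=1$ by Proposition~\ref{tree}; so the minimum size is $n-1$. For $k\ge 2$ it is cleanest to work with the \emph{cycle rank} $\mu(G)=|E(G)|-|V(G)|+1$, since for a connected graph $|E(G)|=n+2k-4$ is equivalent to $\mu(G)=2k-3$. Thus the statement for $k\ge 2$ splits into: (i) exhibiting a connected graph $G$ of order $n$ with $pd(G)=k$ and $\mu(G)=2k-3$, and (ii) proving that every connected $G$ with $pd(G)=k$ has $\mu(G)\ge 2k-3$.

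For the construction (i) I would use the \emph{book} $B_m$: two adjacent vertices $u,v$ with $m$ common neighbours $w_1,\dots,w_m$, and edges $uv$, $uw_i$, $vw_i$. Since $u,v$ are adjacent and share $m$ common neighbours, Lemma~\ref{l1} gives $pd(B_m)\ge\lceil m/2\rceil+1$. For the reverse bound I would give an explicit $(\lceil m/2\rceil+1)$-colouring: colour each page $\{uw_i,vw_i\}$ with two distinct colours, so every $E_{w_i}$ is a proper cut isolating $w_i$; and to separate $u$ from $v$, cut $\lfloor m/2\rfloor$ pages on the $u$-side and the remaining $\lceil m/2\rceil$ pages on the $v$-side, choosing the colours of these cut edges (plus a dedicated colour for $uv$) to be pairwise distinct at $u$ and at $v$. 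This yields $pd(B_m)=\lceil m/2\rceil+1$, so $B_{2k-3}$ has $pd=k$, $2k-1$ vertices, $4k-5$ edges, and $\mu=2k-3$. Attaching a tree on the remaining $n-(2k-1)\ge 0$ vertices (valid since $k\le\lceil n/2\rceil$) keeps $pd=k$ by Lemma~\ref{5} and adds one edge per new vertex, giving total size $n+2k-4$.

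For the lower bound (ii) I would prove the stronger inequality $\mu(G)\ge 2\,pd(G)-3$ for every connected $G$, by strong induction on $\mu(G)$. By Lemma~\ref{5} and additivity of $\mu$ over blocks, one may pass to a block $B^\ast$ with $pd(B^\ast)=pd(G)$ and note $\mu(G)\ge\mu(B^\ast)$; so it suffices to treat a $2$-connected $B^\ast$. The base cases $\mu\le 2$ reduce to $pd\le 2$: such a graph is built from cycles (each with $pd\le 2$ by Proposition~\ref{cycle}) and at most one theta graph, and a theta graph has $pd\le 2$ because one isolates its degree-$2$ vertices by their two differently coloured edges and separates the two branch vertices by one cut edge per path, balancing colours so that at most two cut edges meet at each branch vertex. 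For the inductive step, with $B^\ast$ $2$-connected and $\mu(B^\ast)\ge 3$, suppose $e_1,e_2$ are independent edges with $B^\ast-\{e_1,e_2\}$ connected. Then $\mu(B^\ast-\{e_1,e_2\})=\mu(B^\ast)-2$, so the induction hypothesis gives $pd(B^\ast-\{e_1,e_2\})\le(\mu(B^\ast)+1)/2$, and Lemma~\ref{match} yields $pd(B^\ast)\le pd(B^\ast-\{e_1,e_2\})+1\le(\mu(B^\ast)+3)/2$, i.e. $\mu(B^\ast)\ge 2\,pd(B^\ast)-3$.

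The crux, and the step I expect to be the main obstacle, is the existence of that matching: \emph{every $2$-connected graph $G$ with $\mu(G)\ge 3$ contains two independent edges whose deletion keeps $G$ connected}. I would prove this from an open ear decomposition $G=C\cup P_1\cup\cdots\cup P_{\mu-1}$. If some ear $P_r$ has length at least $2$, let $e_1$ be the edge of $P_r$ at an endpoint $a$, and let $e_2$ be any edge avoiding $a$ in the graph $H$ obtained by deleting the internal vertices of $P_r$; such an $e_2$ exists because $H$ is $2$-connected on at least four vertices. Then $e_1,e_2$ are independent, $H-e_2$ stays connected, and the internal vertices of $P_r$ remain attached through its other endpoint, so $G-e_1-e_2$ is connected. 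If instead every ear is a single edge, then $G$ is a cycle with at least two chords, and one takes either two independent chords or one chord together with a disjoint cycle edge, in each case leaving a spanning path that keeps $G-e_1-e_2$ connected. The only delicate points are these short-ear and chord configurations, which need a little case checking but no new idea.
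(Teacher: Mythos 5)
Your proposal is correct and follows essentially the same route as the paper: the same lower-bound induction (delete two independent edges whose removal preserves connectivity, apply Lemma~\ref{match}, and recurse --- merely reparametrized via the cycle rank $\mu=|E|-n+1$ instead of inducting on $k$), and the identical extremal graph, since your book $B_{2k-3}$ is exactly the paper's $K_{2,2k-3}+a_1a_2$ with a pendant tree attached. The only substantive differences are your explicit $k$-coloring of the book (the paper instead gets $pd\le k$ from the first half of its argument) and your ear-decomposition justification of the key claim that a $2$-connected graph with $\mu\ge 3$ has two independent, jointly deletable edges, which the paper asserts via three internally disjoint paths at a comparable level of detail.
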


\begin{proof}
For $k=1$, $|E(G)|\geq n-1$ since $G$ is
connected and $pd(T)=1$ by Theorem \ref{pd23}.
Thus, $|E(G)|_{min}=n-1$.

For $k\geq 2$,
we first show that if the size of a connected
graph $G$ of order $n$ is at most $n+2k-5$,
then $pd(G)\leq k-1$.
We proceed by induction on $k$. We have seen
that the result is true for $k=2$ by Proposition
\ref{tree}.
Suppose that $G$ is a graph with $e(G)\leq n+2k-5$.
If $G$ is a graph with at most one
block which is a cycle and other blocks are trivial,
the result is true for $G$ by Lemma \ref{cycle} and Lemma \ref{5}.
Otherwise,
we claim that there exist two matching edges,
denoted $e_1$, $e_2$, of $G$ such that
$G\setminus\{e_1,e_2\}$ is connected graph of order $n$.
Now, we distinguish two cases as follows:

($i$) $G$ has exactly one block with at least
$4$ vertices which is not a cycle, and other blocks
are trivial;

($ii$) $G$ has at least two blocks with at
least $3$ vertices respectively.

For ($i$), let $B_k$ be a block with at least
$4$ vertices which is not a cycle. Then $B_k$ contains
two vertices $x$ and $y$ such that they are connected
by at least three internally-disjoint $(x,y)$-paths.
We can respectively pick one edge from
two paths of $(x,y)$-paths as matching edges;
For ($ii$), we can respectively pick one edge from
two blocks $B_i$ and $B_j$ as matching edges. The
edges from $(i)$ and $(ii)$ can insure that
$H=G\setminus\{e_1,e_2\}$ is also connected of order $n$.
Since $e(H)\leq n+2(k-1)-5$, we have pd$(H)\leq k-2$
by induction hypothesis.
Then $pd(G)=pd(H+\{e_1,e_2\})\leq pd(H)+1\leq k-1$ by
Lemma \ref{match}.
Hence, we obtain that if $\textnormal{pd}(G)=k$,
then $|E(G)|\geq n+2k-4$.

Next we show that for each pair integers $k$ and $n$ with
$2\leq k\leq \lceil\frac{n}{2}\rceil$ there is a connected graph $G$ of
order $n$ and size $n+2k-4$ such that $pd(G)=k$.
Let $H=K_{2,2k-3}$ with bipartition $A=\{a_1,a_2\}$ and
$B=\{b_1,b_2,\ldots, b_{2k-3}\}$. Let $G$ be the graph
of order $n$ and size $n+2k-4$ obtained from $H$ by
adding an edge $a_1a_2$ and adding $n-2k+1$ pendent
edges to a vertex of $H$.
We obtain that $pd(G)=\textnormal{pd}(H+a_1a_2)= k$
by Lemma \ref{5}.
\end{proof}


\begin{thebibliography}{1}

\bibitem{BCL}
X. Bai, R. Chang, X. Li,
More on rainbow disconnection in graphs,
arXiv:1810.09736 [math.CO].

\bibitem{BL}
X. Bai, X. Li,
Erd\H{o}s-Gallai-type results for the rainbow disconnection number of graphs,
arXiv:1901.02740 [math.CO].

\bibitem{BM}
J.A. Bondy, U.S.R. Murty, Graph Theory, GTM $244$, Springer, $2008$.

\bibitem{CDHHZ}
G. Chartrand, S. Devereaux, T.W. Haynes, S.T. Hedetniemi, P. Zhang,
Rainbow disconnection in graphs,
{\it Discuss. Math. Graph Theory}
\textbf{38}(2018), 1007--1021.

\bibitem{PL1}
P. Li, X. Li, Monochromatic disconnection of graphs, arXiv:1901.01372 [math.CO].

\bibitem{PL2}
P. Li, X. Li, Monochromatic disconnection: Erd\"os-Gallai-type
problems and product graphs, arXiv:1904.08583 [math.CO].

\bibitem{D}
Diestel, Reinhard, Graph Theory, GTM $173$, Springer, $2000$.

\bibitem{HL}
Z. Huang, X. Li, Hardness results for rainbow disconnection of graphs, arXiv:1811.11939 [math.CO].

\end{thebibliography}
\end{document}